\newtheorem{theorem}{Theorem}[section]
\newtheorem{lemma}[theorem]{Lemma}
\newtheorem{definition}[theorem]{Definition}
\newtheorem{proposition}[theorem]{Proposition}
\newtheorem{corollary}[theorem]{Corollary}
\newtheorem{example}[theorem]{Example}
\newtheorem{problem}[theorem]{Problem}
\numberwithin{equation}{section}
\begin{document}

\title{Classification of separately continuous functions with values in $\sigma$-metrizable spaces}

\author{Olena Karlova}

\begin{abstract}
We prove that every vertically nearly separately continuous function defined on a product of a strong PP-space and a topological space and with values in a strongly $\sigma$-metrizable space with a special stratification,    is a pointwise limit of continuous functions.
\end{abstract}



\maketitle

\section{Introduction}

Let $X$, $Y$ and $Z$ be topological spaces.

By $C(X,Y)$ we denote the collection of all continuous mappings between $X$ and $Y$.

For a mapping $f:X\times Y\to Z$ {and a} point $(x,y)\in X\times Y$ we write
$$f^x(y)=f_y(x)=f(x,y).$$

We say that a mapping $f:X\times Y\to Z$ {is} {\it separately continuous}, $f\in CC(X\times Y,Z)$, if $f^x\in C(Y,Z)$ and $f_y\in C(X,Z)$ for every point $(x,y)\in X\times Y$. A mapping $f:X\times Y\to Z$ is said to be {\it vertically nearly separately continuous}, $f\in C\overline{C}(X\times Y,Z)$, if $f_y\in C(X,Z)$ for every $y\in Y$ and there exists a dense set $D\subseteq X$ such that $f^x\in C(Y,Z)$ for all $x\in D$.

Let $B_0(X,Y)=C(X,Y)$. Assume that the classes $B_\xi(X,Y)$ are already defined for all $\xi<\alpha$, where $\alpha<\omega_1$. Then $f:X\to Y$ is {said to be} {\it of the $\alpha$-th Baire class}, $f\in B_\alpha(X,Y)$, if $f$ is a pointwise limit of a sequence of mappings $f_n\in
B_{\xi_n}(X,Y)$, where $\xi_n<\alpha$. In particular, $f\in B_1(X,Y)$ if it is a pointwise limit of a sequence of continuous mappings.

In 1898 H.~Lebesgue \cite{Leb1} proved that every real-valued separately continuous functions of two real variables is of the first Baire class.
Lebesgue's theorem was generalized by many mathematicians (see \cite{Hahn, Moran, Ru, Vera, SobPP, BanakhT, Bu1, KMas3, K4, M} and the references given there). W.~Rudin\cite{Ru} showed that $C\overline{C}(X\times Y,Z)\subseteq B_{1}(X\times Y,Z)$ if $X$ is a metrizable space, $Y$ a topological space and $Z$ a locally convex space. Naturally the following question {has been} arose, which is still unanswered.

\begin{problem}\label{P1}
  Let $X$ be a metrizable space, $Y$ a topological space and $Z$ a topological vector space. Does every separately continuous mapping $f:X\times Y\to Z$ belong to the first Baire class?
\end{problem}

V.~Maslyuchenko and A.~Kalancha \cite{KMas3} showed that the answer is positive, when  $X$ is a metrizable space with finite \v{C}ech-Lebesgue dimension.
T.~Banakh \cite{BanakhT} gave {a} positive answer in the case when $X$ is a  metrically quarter-stratifiable  paracompact strongly countably dimensional space and $Z$ is an equiconnected space.  In \cite{Ka} it was shown that the answer to Problem~\ref{P1} is positive for a metrizable spaces $X$ and $Y$ and a metrizable arcwise connected and locally arcwise connected space $Z$. It was pointed out in \cite{KaMas} that $CC(X\times Y,Z)\subseteq B_1(X\times Y,Z)$ if $X$ is a metrizable space, $Y$ is a topological space and $Z$ is an equiconnected strongly $\sigma$-metrizable space with a stratification $(Z_n)_{n=1}^\infty$ (see the definitions below), where $Z_n$ is a metrizable arcwise connected and locally arcwise connected space for every $n\in\mathbb N$.

In the given paper we generalize the above-mentioned result from \cite{KaMas} {to} the case of vertically nearly separately continuous mappings. To do this, we introduce the class of strong PP-spaces which includes the class of all metrizable spaces. In Section~\ref{sec:PP} we investigate some properties of strong PP-spaces. In Section~\ref{sec:Leb} we establish {an} auxiliary result which {generalizes} the famous Kuratowski-Montgomery theorem (see \cite{Ku2} and \cite{Mont}). Finally, in Section~\ref{sec:Baire} we prove that the inclusion $C\overline{C}(X\times Y,Z)\subseteq B_1(X\times Y,Z)$ holds if $X$ is a strong PP-space, Y is a topological space and $Z$ is a contractible space with a stratification $(Z_n)_{n=1}^\infty$, where $Z_n$ is a metrizable arcwise connected and locally arcwise connected space for every $n\in\mathbb N$.

\section{Preliminary observations}

A subset $A$ of a topological space $X$ is {\it a zero (co-zero) set} if $A=f^{-1}(0)$ ($A=f^{-1}((0,1])$) for some continuous function $f:X\to [0,1]$.

Let ${\mathcal G}_0^*$ and ${\mathcal
F}_0^*$ be  collections of all co-zero and zero subsets of $X$, respectively. Assume that the classes ${\mathcal G}_\xi^*$ and ${\mathcal F}_\xi^*$ are defined for all $\xi<\alpha$, where
$0<\alpha<\omega_1$. Then, if $\alpha$ is odd, the class ${\mathcal G}_\alpha^*$ (${\mathcal F}_\alpha^*$) is consists of all countable intersections  (unions) of sets of lower classes, and, if $\alpha$ is even, the class ${\mathcal G}_\alpha^*$ (${\mathcal F}_\alpha^*$) is consists of all countable unions  (intersections) of sets of lower classes. The classes  ${\mathcal F}_\alpha^*$ for odd $\alpha$ and  ${\mathcal G}_\alpha^*$ for even  $\alpha$ are said to be
{\it functionally additive}, and the classes ${\mathcal F}_\alpha^*$
for even $\alpha$ and ${\mathcal G}_\alpha^*$ for odd
$\alpha$ are called {\it functionally multiplicative}. If a set belongs to the $\alpha$'th functionally additive and functionally multiplicative class, then it is called {\it
functionally ambiguous of the $\alpha$'th class}.
Remark that $A\in {\mathcal F}_\alpha^*$ if and only {if $X\setminus A\in {\mathcal G}_\alpha^*$}.

If a set $A$ is of the first functionally additive /multiplicative/ class, we say that $A$ is an $F_\sigma^*$ /$G_\delta^*$/ set.

Let us observe that  if $X$ is a perfectly normal space (i.e. a normal space in which every closed subset {is} of the type $G_\delta$), then functionally additive and functionally multiplicative classes coincide with ordinary additive and multiplicative classes respectively, since every open set in $X$ is functionally open.

\begin{lemma}\label{l:3}
  Let $\alpha\ge 0$, $X$ be a topological space and {let} $A\subseteq X$ {be} of the $\alpha$'th functionally multiplicative class. Then there exists {a function  $f\in   B_{\alpha}(X,[0,1])$ such} that $A=f^{-1}(0)$.
\end{lemma}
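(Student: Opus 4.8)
The plan is to argue by transfinite induction on $\alpha$; write $(M_\alpha)$ for the assertion of the lemma. For $\alpha=0$ the $0$'th functionally multiplicative class is exactly the class ${\mathcal F}_0^*$ of zero sets, so $A=f^{-1}(0)$ for some $f\in C(X,[0,1])=B_0(X,[0,1])$, and $(M_0)$ holds. Throughout I will use two standard permanence properties of the Baire classes of real-valued functions, each proved by a routine induction on the index: $B_\xi(X,\mathbb R)$ is closed under post-composition with continuous maps and under finite sums. I will also use the complementation remark from the text: since $A$ is of the $\xi$'th functionally additive class iff $X\setminus A$ is of the $\xi$'th functionally multiplicative class, the inductive hypothesis $(M_\xi)$ simultaneously provides, for every set $B$ of additive class $\xi$, a function $g\in B_\xi(X,[0,1])$ with $X\setminus B=g^{-1}(0)$. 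Thus I only ever have to invoke $(M_\xi)$ at lower levels.

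For the inductive step fix $\alpha\ge 1$ and assume $(M_\xi)$ for all $\xi<\alpha$. By the definition of the $\alpha$'th functionally multiplicative class, $A=\bigcap_{n=1}^\infty A_n$, where each $A_n$ is of some class $\xi_n<\alpha$ (additive or multiplicative). For each $n$ the inductive hypothesis yields a function in $B_{\xi_n}(X,[0,1])$ whose zero set is $A_n$ (if $A_n$ is multiplicative) or $X\setminus A_n$ (if $A_n$ is additive); composing it with the continuous maps $t\mapsto\min(1,kt)$ (and $t\mapsto 1-\min(1,kt)$ in the additive case), I obtain functions $\varphi_{n,k}\in B_{\xi_n}(X,[0,1])$ which, for each fixed $x$, converge as $k\to\infty$ to the indicator $h_n:=\mathbf 1_{X\setminus A_n}$. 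I then set
\[
f=\sum_{n=1}^\infty 2^{-n}h_n,
\]
which maps $X$ into $[0,1]$ and satisfies $f(x)=0$ iff $x\in A_n$ for all $n$, that is $f^{-1}(0)=A$, as required.

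It remains to check that $f\in B_\alpha(X,[0,1])$, and this is the main obstacle. The naive route writes $h_n\in B_{\xi_n+1}$ and then assembles $f$ from the $h_n$, which threatens to push the class up to $\alpha+1$. To stay at level $\alpha$ I instead form the \emph{diagonal} functions $F_m=\sum_{n=1}^m 2^{-n}\varphi_{n,m}$. Each $F_m$ is a finite sum of functions of classes $\xi_1,\dots,\xi_m$, hence lies in $B_{\beta_m}$ with $\beta_m=\max_{n\le m}\xi_n<\alpha$. The estimate
\[
|f(x)-F_m(x)|\le 2^{-m}+\sum_{n=1}^m 2^{-n}\,|h_n(x)-\varphi_{n,m}(x)|
\]
then shows, after splitting the last sum at a threshold $N_0$ with $\sum_{n>N_0}2^{-n}<\varepsilon$ and using $\varphi_{n,m}(x)\to h_n(x)$ for each fixed $n$, that $F_m(x)\to f(x)$ for every $x\in X$. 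Hence $f$ is a pointwise limit of the functions $F_m\in B_{\beta_m}$ with $\beta_m<\alpha$, so $f\in B_\alpha(X,[0,1])$ by the definition of the Baire classes, completing the induction.

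I expect the delicate point to be precisely this last level count rather than the construction of $f$ itself: the individual indicators $h_n$ already sit one Baire class above $\xi_n$, so an ordinary uniformly convergent series of the $h_n$ would cost an extra class. The diagonalisation $F_m=\sum_{n\le m}2^{-n}\varphi_{n,m}$, combined with the uniform tail bound $\sum_{n>m}2^{-n}=2^{-m}$, is what keeps everything inside $B_\alpha$; honestly verifying the pointwise convergence of $(F_m)$ through the splitting-at-$N_0$ estimate is the only genuine computation.
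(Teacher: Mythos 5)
Your proof is correct and follows essentially the same route as the paper: the same decomposition $A=\bigcap_{n}A_n$ into sets of lower classes, the same lower-class approximants converging pointwise to the indicators, and literally the same function $f=\sum_{n}2^{-n}\mathbf{1}_{X\setminus A_n}$ (the paper writes it as $1-\sum_{n}2^{-n}\chi_{A_n}$ with $\chi_{A_n}=\lim_m\sqrt[m]{f_n}$). The only divergence is the final level count, where the paper appeals to closure of $B_\alpha$ under uniformly convergent series while you diagonalize explicitly to exhibit $f$ as a pointwise limit of functions of classes $<\alpha$; this makes your version slightly more self-contained but is not a genuinely different argument.
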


\begin{proof}
The hypothesis of the lemma is obvious if $\alpha=0$.

Suppose the assertion of the lemma is true for all $\xi<\alpha$ and let $A$ {be} a set of the $\alpha$'th functionally multiplicative class. Then
   $A=\bigcap\limits_{n=1}^\infty A_n$, where $A_n$ belong to the $\alpha_n$'th functionally additive  class {with} $\alpha_n<\alpha$ for all $n\in\mathbb N$. By the assumption, there exists a sequence of functions \mbox{$f_n\in B_{\alpha_n}(X,[0,1])$} such that  $A_n=f_n^{-1}((0,1])$.
   Notice that for every $n$ the characteristic function $\chi_{A_n}$ of $A_n$ belongs to the $\alpha$-th Baire class. Indeed, setting
   $h_{n,m}(x)=\sqrt[m]{f_n(x)}$, we obtain {a} sequence of functions $h_{n,m}\in B_{\alpha_n}(X,[0,1])$ which is pointwise convergent to  $\chi_{A_n}$. Now let
  $$
  f(x)=1-\sum\limits_{n=1}^\infty\frac{1}{2^n}\chi_{A_n}(x).
  $$
  for all $x\in X$.
  Then $f\in B_\alpha(X,[0,1])$ as a sum of a uniform convergent series of functions of the {$\alpha$'th class}. Moreover, it is easy to see that  $A=f^{-1}(0)$.\hfill$\Box$
\end{proof}

A topological space $X$ is
  \begin{itemize}
    \item   {\it
equiconnected} if there exists a continuous mapping $\lambda:X\times X\times [0,1]\to X$ such that

 (1) $\lambda(x,y,0)=x$;

 (2) $\lambda(x,y,1)=y$;

 (3) $\lambda(x,x,t)=x$\\ for all $x,y\in X$ and $t\in [0,1]$.

\item {\it contractible} if there exist  $x^*\in X$ and a continuous function $\gamma:X\times
[0,1]\to X$ such that $\gamma(x,0)=x$ and $\gamma(x,1)=x^*$. A contractible space $X$ with such a point $x^*$ and such a function $\gamma$ we denote by $(X,x^*,\gamma)$.

\end{itemize}

Remark that every convex subset $X$ of a topological vector space is equiconnected, where $\lambda:X\times X\times [0,1]\to X$ is defined by the formula $\lambda(x,y,t)=(1-t)x+ty$, $x,y\in X$, $t\in [0,1]$.

It is easily seen that a topological space $X$ is contractible if and only if there exists {a continuous function $\lambda:X\times X\times [0,1]\to X$ such} that $\lambda(x,y,0)=x$ and  $\lambda(x,y,1)=y$ for all $x,y\in X$. Indeed, if $(X,x^*,\gamma)$ is a contractible space, then the formula
$$
 \lambda(x,y,t)=\left\{\begin{array}{ll}
                         \gamma(x,2t), & 0\le t\le\frac{1}{2}, \\
                         \gamma(y,-2t+2), & \frac{1}{2}<t\le 1.
                       \end{array}
 \right.
 $$
 defines {a} continuous function $\lambda:X\times X\times [0,1]\to X$ with the required properties. Conversely, if $X$ is equiconnected, then fixing a point $x^*\in X$ and setting $\gamma(x,t)=\lambda(x,x^*,t)$, we obtain {that the space $(X,x^*,\gamma)$ is contractible}.

\begin{lemma}\label{l:4}
  Let  $0\le\alpha<\omega_1$, $X$ a topological space, $Y$  a contractible space, let $A_1,\dots,A_n$ be disjoint sets of the $\alpha$'th functionally multiplicative class in $X$ and \mbox{$f_i\in B_\alpha(X,Y)$}
for each $1\le i\le n$. Then there exists {a function $f\in B_\alpha(X,Y)$ such} that $f|_{A_i}=f_i$ for
each $1\le i\le n$. \end{lemma}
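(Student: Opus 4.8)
The plan is to induct on $n$, using the contractibility of $Y$ to interpolate between two functions and Lemma~\ref{l:3} to manufacture the interpolation parameter. The base case $n=1$ is trivial: take $f=f_1$. For the inductive step I assume the statement for $n-1$ and produce, via the induction hypothesis applied to $A_1,\dots,A_{n-1}$ and $f_1,\dots,f_{n-1}$, a function $g\in B_\alpha(X,Y)$ with $g|_{A_i}=f_i$ for every $i<n$. It then remains to blend $g$ with $f_n$ so that the result retains the values of $g$ on $A_1\cup\dots\cup A_{n-1}$ while agreeing with $f_n$ on $A_n$.

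To build the blending parameter I would first invoke Lemma~\ref{l:3} to choose, for each $i$, a function $g_i\in B_\alpha(X,[0,1])$ with $A_i=g_i^{-1}(0)$. Put $P=\prod_{i=1}^{n-1}g_i$; then $P\in B_\alpha(X,[0,1])$, $P$ vanishes on $\bigcup_{i<n}A_i$, and $P>0$ on $A_n$ because the sets are pairwise disjoint. Since $A_n\cap\bigcup_{i<n}A_i=\emptyset$, the functions $P$ and $g_n$ never vanish simultaneously, so
$$
w=\frac{P}{P+g_n}
$$
is a well-defined function in $B_\alpha(X,[0,1])$ with $w\equiv 0$ on $\bigcup_{i<n}A_i$ and $w\equiv 1$ on $A_n$. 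Here I use that finite sums, products and quotients with nonvanishing denominator of functions of the $\alpha$-th Baire class are again of the $\alpha$-th Baire class.

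Using contractibility in the equivalent form noted before the statement, fix a continuous $\lambda:Y\times Y\times[0,1]\to Y$ with $\lambda(y,y',0)=y$ and $\lambda(y,y',1)=y'$ for all $y,y'\in Y$, and define
$$
f(x)=\lambda\bigl(g(x),f_n(x),w(x)\bigr),\qquad x\in X.
$$
On $A_i$ with $i<n$ we have $w(x)=0$, so $f(x)=g(x)=f_i(x)$, and on $A_n$ we have $w(x)=1$, so $f(x)=f_n(x)$; thus $f|_{A_i}=f_i$ for every $1\le i\le n$, as required.

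The remaining, and main, point is to check that $f\in B_\alpha(X,Y)$. This reduces to the stability of the Baire classes under composition with a continuous map: if $\varphi$ is continuous and $h_1,\dots,h_m$ are of the $\alpha$-th Baire class, then $x\mapsto\varphi(h_1(x),\dots,h_m(x))$ is of the $\alpha$-th Baire class. I would prove this by transfinite induction on $\alpha$, the base case being that a composition of continuous maps is continuous, and the successor/limit step following by pushing the defining pointwise limits through the continuous $\varphi$ (using that a finite tuple of functions of class $<\alpha$ is a function of class $<\alpha$ into the product). Applying this with $\varphi=\lambda$ and the triple $(g,f_n,w)$ gives $f\in B_\alpha(X,Y)$ and completes the induction. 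I expect this composition and tupling bookkeeping, rather than the geometric construction itself, to be the technical crux; the disjointness of the $A_i$ enters precisely to keep the denominator $P+g_n$ from vanishing.
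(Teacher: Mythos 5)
Your proof is correct and follows essentially the same route as the paper: induction on $n$, Lemma~\ref{l:3} to produce a $[0,1]$-valued Baire-$\alpha$ parameter that is $0$ on the earlier sets and $1$ on $A_n$ (the paper writes $h_1/(h_1+h_2)$ after bundling $A_1\cup\dots\cup A_{n-1}$ into one multiplicative-class set, which is exactly what your product $P=\prod_{i<n}g_i$ witnesses), and then blending via the contraction $\lambda$. The closure properties of $B_\alpha$ under quotients with nonvanishing denominator and under composition with continuous maps, which you flag as the technical crux, are used by the paper in the same way and without further comment.
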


\begin{proof} Let $n=2$. {In view} of Lemma \ref{l:3} there exist functions $h_i\in B_\alpha(X,[0,1])$ such that $A_i=h_i^{-1}(0)$ for $i=1,2$. We set
$\displaystyle h(x)=\frac{h_1(x)}{h_1(x)+h_2(x)}$ for all $x\in X$. It is easy to verify that $h\in B_\alpha(X,[0,1])$ and  $A_i=h^{-1}(i-1)$, $i=1,2$.

Consider {a continuous function $\lambda:Y\times Y\times [0,1]\to Y$ such} that $\lambda(y,z,0)=y$ and  $\lambda(y,z,1)=z$ for all $y,z\in
Y$. Let $$f(x)=\lambda(f_1(x),f_2(x),h(x))$$ for every $x\in X$. Clearly, $f\in B_{\alpha}(X,Y)$. If  $x\in A_1$, then
$f(x)=\lambda(f_1(x),f_2(x),0)=f_1(x)$. If  $x\in A_2$, then $f(x)=\lambda(f_1(x),f_2(x),1)=f_2(x)$.

Assume that the lemma is true for all $2\le k<n$ and let $k=n$. According to our assumption, there exists a function $g\in
B_{\alpha}(X,Y)$ such that \mbox{$g|_{A_i}=f_i$} for all $1\le i<n$. Since $A=\bigcup\limits_{i=1}^{n-1}A_i$ and $A_n$ are disjoint sets which belong to the $\alpha$'th functionally multiplicative { class} in $X$, by the assumption, there is {a function
 $f\in B_\alpha(X,Y)$ with} $f|_A=g$
and $f|_{F_n}=f_n$. Then $f|_{F_i}=f_i$ for every $1\le i\le n$.
\end{proof}

Let $0\le \alpha<\omega_1$. We say that a mapping $f:X\to Y$ is {\it of the (functional) $\alpha$-th Lebesgue class},  $f\in H_\alpha(X,Y)$ ($f\in H_\alpha^*(X,Y)$), if the preimage $f^{-1}(V)$ belongs to the $\alpha$'th  (functionally) additive {class} in $X$ for any open set $V\subseteq Y$.

Clearly, $H_\alpha(X,Y)=H_\alpha^*(X,Y)$ for any perfectly normal space $X$.

The following statement is well-known, but we present a proof here {for convenience} of the reader.

\begin{lemma}\label{l:2}
 Let $X$ and $Y$ be topological spaces, {let} $(f_k)_{k=1}^\infty$ be a sequence of functions  \mbox{$f_k:X\to Y$} which is pointwise convergent to a function  $f:X\to Y$, {let} $F\subseteq Y$ be a closed set such that  $F=\bigcap\limits_{n=1}^\infty \overline{V}_n$, where $(V_n)_{n=1}^\infty$ is a sequence of open sets in $Y$ such that $\overline{V_{n+1}}\subseteq V_n$ for all $n\in\mathbb N$. {Then}
\begin{equation}\label{eq:1} f^{-1}(F)=\bigcap\limits_{n=1}^\infty\bigcup\limits_{k=n}^\infty f_k^{-1}(V_n). \end{equation}
 \end{lemma}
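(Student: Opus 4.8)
The plan is to prove the equality \eqref{eq:1} by establishing the two inclusions separately, and the whole argument rests on one structural consequence of the hypothesis $\overline{V_{n+1}}\subseteq V_n$. First I would record that the open sets are nested: since $V_{j+1}\subseteq\overline{V_{j+1}}\subseteq V_j$, we get $V_m\subseteq V_n$ whenever $m\ge n$; moreover, $F=\bigcap_{j}\overline{V_j}\subseteq\overline{V_{n+1}}\subseteq V_n$, so $F\subseteq V_n$ for every $n$. These two facts are what make both directions go through.

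For the inclusion $f^{-1}(F)\subseteq\bigcap_{n=1}^\infty\bigcup_{k=n}^\infty f_k^{-1}(V_n)$, I would fix $x$ with $f(x)\in F$ together with an arbitrary $n$. Since $f(x)\in F\subseteq V_n$ and $V_n$ is open, the pointwise convergence $f_k(x)\to f(x)$ gives an index $N$ with $f_k(x)\in V_n$ for all $k\ge N$. Choosing any $k\ge\max(n,N)$ then shows $x\in\bigcup_{k=n}^\infty f_k^{-1}(V_n)$, and as $n$ was arbitrary, $x$ lies in the intersection.

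For the reverse inclusion I would take $x$ in the right-hand side and fix $n$, aiming to show $f(x)\in\overline{V_n}$. The membership $x\in\bigcup_{k=m}^\infty f_k^{-1}(V_m)$ only furnishes a single suitable index for each $m$, so the idea is to let $m$ run over all values $\ge n$: for each such $m$ pick some $k_m\ge m$ with $f_{k_m}(x)\in V_m\subseteq V_n$. Because $k_m\ge m\to\infty$, the sequence $\bigl(f_{k_m}(x)\bigr)_m$ is a subsequence of $\bigl(f_k(x)\bigr)_k$ and hence converges to $f(x)$; being a limit of points of $V_n$, the value $f(x)$ belongs to $\overline{V_n}$. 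Letting $n$ vary yields $f(x)\in\bigcap_{n=1}^\infty\overline{V_n}=F$, i.e. $x\in f^{-1}(F)$.

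The hard part, and the point where the exact form of the hypotheses is indispensable, is this reverse inclusion: an element of $\bigcup_{k=m}^\infty f_k^{-1}(V_m)$ guarantees only one admissible index, not eventual membership, so one cannot apply convergence directly to a fixed $V_n$. The nesting $V_m\subseteq V_n$ for $m\ge n$ is precisely what allows the isolated indices harvested at each level $m$ to be assembled into a genuine subsequence landing in the single fixed open set $V_n$, after which pointwise convergence finishes the job. I would also note that the final closure step uses nothing beyond the fact that the limit of a sequence of points of $V_n$ lies in $\overline{V_n}$, which holds in an arbitrary topological space, so no separation or metrizability assumptions on $Y$ are needed.
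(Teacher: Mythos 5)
Your proof is correct and follows essentially the same route as the paper's: the forward inclusion uses $F\subseteq V_n$ together with pointwise convergence, and the reverse inclusion exploits the nesting $V_m\subseteq V_n$ for $m\ge n$ to produce indices $k\ge m$ with $f_k(x)\in V_n$ for arbitrarily large $m$. The only difference is cosmetic: you conclude directly that $f(x)\in\overline{V_n}$ as a limit of points of $V_n$, whereas the paper reaches the same conclusion by contradiction, assuming $f(x)\notin\overline{V_n}$ and using the neighborhood $Y\setminus\overline{V_n}$.
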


\begin{proof}
  Let $x\in f^{-1}(F)$ and $n\in\mathbb N$. Taking into account that $V_n$ is an open neighborhood of $f(x)$ and \mbox{$\lim\limits_{k\to\infty}f_k(x)=f(x)$}, we obtain that there is $k\ge n$ such that $f_k(x)\in V_n$.

   Now let $x$ {belong} to the {right-hand} side of~(\ref{eq:1}), i.e. for every  $n\in\mathbb N$ there exists a number $k\ge n$ such that $f_k(x)\in V_n$. Suppose
   $f(x)\not\in F$. Then there exists  $n\in\mathbb N$ such that  $f(x)\not\in \overline{V_{n}}$. Since $U=X\setminus  \overline{V_{n}}$ is a neighborhood of $f(x)$, there exists $k_0$ such that  $f_{k}(x)\in U$  for all $k\ge k_0$. In particular, $f_k(x)\in U$ for $k=\max\{k_0,n\}$. But then $f_k(x)\not\in V_n$, a contradiction. Hence, $x\in f^{-1}(F)$.
\end{proof}

\begin{lemma}\label{l:45}
 Let $X$ be a topological space, $Y$  a perfectly normal space and $0\le \alpha<\omega_1$. Then $B_\alpha(X,Y)\subseteq H^*_\alpha(X,Y)$ if $\alpha$
  is finite, and $B_\alpha(X,Y)\subseteq H^*_{\alpha+1}(X,Y)$ if $\alpha$ is infinite.
\end{lemma}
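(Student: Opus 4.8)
The plan is to argue by transfinite induction on $\alpha$, proving the statement for preimages of \emph{closed} sets (the claim for open sets then follows by complementation). Throughout I abbreviate by $\Sigma_\alpha$ and $\Pi_\alpha$ the $\alpha$'th functionally additive and multiplicative classes in $X$. I will use three structural facts that follow at once from the definitions: the hierarchy is monotone, $\Sigma_\beta\cup\Pi_\beta\subseteq\Sigma_\gamma\cap\Pi_\gamma$ whenever $\beta\le\gamma$ (a set of a lower class is a trivial countable union/intersection of itself); each $\Sigma_\beta$ is closed under countable unions and each $\Pi_\beta$ under countable intersections (a union of unions of lower-class sets is again such a union); and, by the Remark preceding the lemma, $A\in\Pi_\beta$ if and only if $X\setminus A\in\Sigma_\beta$.

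For the base case $\alpha=0$ I would use perfect normality of $Y$: every closed subset of $Y$ is a zero set, so every open $V\subseteq Y$ is co-zero, $V=\phi^{-1}((0,1])$ for some continuous $\phi:Y\to[0,1]$. If $f\in B_0(X,Y)=C(X,Y)$, then $\phi\circ f$ is continuous and $f^{-1}(V)=(\phi\circ f)^{-1}((0,1])\in\Sigma_0$, whence $f\in H_0^*(X,Y)$. For the inductive step, suppose the claim holds below $\alpha$ and let $f=\lim_k f_k$ pointwise with $f_k\in B_{\xi_k}(X,Y)$, $\xi_k<\alpha$. Fix a closed $F\subseteq Y$; by perfect normality write $F=\psi^{-1}(0)$ with $\psi:Y\to[0,1]$ continuous and set $V_n=\psi^{-1}([0,1/n))$. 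These are open and satisfy $\overline{V_{n+1}}\subseteq\psi^{-1}([0,1/(n+1)])\subseteq V_n$ together with $\bigcap_n\overline{V_n}=\psi^{-1}(0)=F$, so the hypotheses of Lemma~\ref{l:2} hold and
$$f^{-1}(F)=\bigcap_{n=1}^\infty\bigcup_{k=n}^\infty f_k^{-1}(V_n).$$

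The remaining work is the class bookkeeping, and this is the real point. By the induction hypothesis each $f_k\in H^*_{\lambda(\xi_k)}(X,Y)$, where $\lambda(\xi)=\xi$ for finite $\xi$ and $\lambda(\xi)=\xi+1$ for infinite $\xi$, so each $f_k^{-1}(V_n)\in\Sigma_{\lambda(\xi_k)}$. If $\alpha$ is finite, then every $\xi_k\le\alpha-1$ is finite, so $\lambda(\xi_k)=\xi_k\le\alpha-1$ and, by monotonicity, $f_k^{-1}(V_n)\in\Sigma_{\alpha-1}$; the inner union $\bigcup_{k\ge n}f_k^{-1}(V_n)$ \emph{stays} in $\Sigma_{\alpha-1}$ because $\Sigma_{\alpha-1}$ is union-stable, and the outer countable intersection of these lower-class sets lands exactly in $\Pi_\alpha$. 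If $\alpha$ is infinite, then $\lambda(\xi_k)\le\alpha$ for all $k$, so $f_k^{-1}(V_n)\in\Sigma_\alpha$, the inner unions remain in $\Sigma_\alpha$, and the outer intersection now yields only a $\Pi_{\alpha+1}$ set. In either case $f^{-1}(F)\in\Pi_\beta$ with $\beta=\alpha$ (finite) or $\beta=\alpha+1$ (infinite), and passing to complements gives $f\in H^*_\beta(X,Y)$.

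I expect the main obstacle to be getting this counting exactly right, rather than any single hard construction. The crucial leverage of Lemma~\ref{l:2} is that the \emph{inner} operation is a union of additive-class sets, which cannot raise the level; only the single outer intersection can. For finite $\alpha$ the inputs sit at level $\alpha-1$, so the one intersection brings them precisely to $\Pi_\alpha$; for infinite $\alpha$ the values $\lambda(\xi_k)$ may already reach $\alpha$ (when some $\xi_k$ is infinite), so the union sits at level $\alpha$ and the intersection is forced up to $\Pi_{\alpha+1}$ — this is exactly the source of the unavoidable one-step loss in the infinite case. Beyond this, I would take care to verify once and for all the monotonicity and union/intersection closure of the functional classes, and to check the lowest level $\alpha=1$ by hand, where the $f_k^{-1}(V_n)$ are merely co-zero and the intersection produces a $G_\delta^*$ set, confirming $B_1\subseteq H^*_1$.
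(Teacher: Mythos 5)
Your proof is correct and takes essentially the same route as the paper's: decompose $f^{-1}(F)$ via Lemma~\ref{l:2}, then track the functionally additive level of the inner unions ($\alpha-1$ in the finite case, $\alpha$ in the infinite case, using the inductive hypothesis on the $f_k$) so that the single outer intersection lands in the $\alpha$'th, respectively $(\alpha+1)$'th, functionally multiplicative class. The only nit is your stated structural fact $\Sigma_\beta\cup\Pi_\beta\subseteq\Sigma_\gamma\cap\Pi_\gamma$ for $\beta\le\gamma$, which should require $\beta<\gamma$; your argument only ever uses the strict case, so nothing breaks.
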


\begin{proof}
   Let $f\in B_\alpha(X,Y)$. Fix an arbitrary closed set $F\subseteq Y$. Since $Y$ is perfectly normal, there exists  a sequence of open sets $V_n\subseteq Y$ such that $\overline{V_{n+1}}\subseteq V_n$ and $F=\bigcap\limits_{n=1}^\infty \overline{V}_n$. {Moreover}, there exists a sequence of functions $f_k:X\to Y$ of Baire classes $<\alpha$ which is pointwise convergent to $f$ on $X$. By Lemma~\ref{l:2}, {equality} (\ref{eq:1}) holds. Denote  $A_n=\bigcup\limits_{k=n}^\infty f_k^{-1}(V_n)$.

If $\alpha=0$, then $f$ is continuous and  $f^{-1}(F)$ is a zero set in
$X$, since $F$ is a zero set in $Y$.

Suppose the assertion of the lemma is true for all finite ordinals $1\le\xi<\alpha$. We show that it is true for  $\alpha$. Remark that  $f_k\in
   B_{\alpha-1}(X,Y)$ for every $k\ge 1$. By the assumption, $f_k\in H^*_{\alpha-1}(X,Y)$ for every $k\in\mathbb N$. Then $A_n$ is of the functionally additive class  $\alpha-1$. Therefore, $f^{-1}(F)$ belongs to the $\alpha$'th functionally multiplicative  class.

  Assume the assertion of the lemma is true for all ordinals  $\omega_0\le\xi<\alpha$. For all $k\in\mathbb N$ we choose $\alpha_k<\alpha$ such that  $f_k\in   B_{\alpha_k}(X,Y)$ for every $k\ge 1$. The preimage $f_k^{-1}(V_n)$, being of the ($\alpha_k+1$)'th functionally additive {class}, belongs to the $\alpha$'th functionally additive class for all $k,n\in\mathbb N$, provided $\alpha_k+1\le\alpha$. Then $A_n$ is of the $\alpha$'th functionally additive class, hence, $f^{-1}(F)$ belongs to {the} $(\alpha+1)$'th functionally multiplicative  class.
\end{proof}

Recall that a family ${\mathcal A}=(A_i:i\in I)$ of sets $A_i$
{\it refines} a family \mbox{${\mathcal B}=(B_j:j\in J)$} of sets
$B_j$ if for every $i\in I$ there exists $j\in J$ such that
$A_i\subseteq B_j$. We write {in this case} ${\mathcal A}\preceq {\mathcal
B}$.

\section{PP-spaces and their properties}\label{sec:PP}

\begin{definition}\label{def:2} {\rm
  A topological space $X$ is said to be a {\it (strong) PP-space} if (for every dense set $D$ in $X$) there exist a sequence $((\varphi_{i,n}:i\in
  I_n))_{n=1}^\infty$ of locally finite partitions of unity on  $X$ and a sequence $((x_{i,n}:i\in I_n))_{n=1}^\infty$ of families of points of  $X$ (of $D$) such that
      \begin{equation}\label{eq:3}
       (\forall x\in X) ((\forall n\in\mathbb N \,\,\,x\in {\rm supp}\varphi_{i_n,n} ) \Longrightarrow (x_{i_n,n}\to x))
    \end{equation}}
\end{definition}

Remark that Definition~\ref{def:2} is equivalent to the following {one}.

\begin{definition}\label{def:1} {\rm
  A topological space $X$ is a {\it (strong) PP-space} if (for every dense set $D$ in $X$) there exist a sequence $((U_{i,n}:i\in
  I_n))_{n=1}^\infty$ of locally finite covers of $X$ by co-zero sets $U_{i,n}$ and a sequence $((x_{i,n}:i\in I_n))_{n=1}^\infty$ of families of points of  $X$ (of $D$) such that
       \begin{equation}\label{eq:2}
       (\forall x\in X) ((\forall n\in\mathbb N \,\,\,x\in U_{i_n,n} ) \Longrightarrow (x_{i_n,n}\to x))
  \end{equation}}
\end{definition}

Clearly, every strong PP-space is a PP-space.

\begin{proposition}
 Every metrizable space is {a} strong PP-space.
\end{proposition}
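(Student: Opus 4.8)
The plan is to verify the defining condition of Definition~\ref{def:1} directly, using a compatible metric together with the fact that metrizable spaces are paracompact. First I would fix a metric $d$ generating the topology of $X$, and an arbitrary dense set $D\subseteq X$; for each $n\in\mathbb N$ I would consider the open cover $\mathcal{B}_n=\{B(a,\tfrac{1}{2n}):a\in X\}$ of $X$ by balls of radius $\tfrac{1}{2n}$. The whole construction is carried out for this fixed but arbitrary $D$, which is exactly what the strong PP-property requires.

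The key step is to replace each $\mathcal{B}_n$ by a locally finite cover of small mesh. Since every metrizable space is paracompact (Stone's theorem), $\mathcal{B}_n$ admits a locally finite open refinement $(U_{i,n}:i\in I_n)$, and after discarding the empty members this is still a cover. Each $U_{i,n}$ lies in some ball of radius $\tfrac{1}{2n}$, so $\operatorname{diam}U_{i,n}\le\tfrac1n$. Moreover $X$, being metrizable, is perfectly normal, so every open subset is a co-zero set; hence each $(U_{i,n}:i\in I_n)$ is a locally finite cover of $X$ by co-zero sets, as required.

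Next I would choose the points. For each $i\in I_n$ the set $U_{i,n}$ is nonempty and open, so by density of $D$ I may pick a point $x_{i,n}\in D\cap U_{i,n}$, obtaining a family $((x_{i,n}:i\in I_n))_{n=1}^\infty$ of points of $D$. It then remains to check~(\ref{eq:2}): if $x\in X$ and, for every $n$, an index $i_n$ is chosen with $x\in U_{i_n,n}$, then $x$ and $x_{i_n,n}$ both belong to $U_{i_n,n}$, whence $d(x,x_{i_n,n})\le\operatorname{diam}U_{i_n,n}\le\tfrac1n\to0$, and therefore $x_{i_n,n}\to x$. This shows that $X$ is a strong PP-space.

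I expect the only nontrivial point to be the existence of the locally finite covers of arbitrarily small mesh, which I would obtain from the paracompactness of metric spaces rather than prove from scratch; the remaining ingredients --- open sets being co-zero in a perfectly normal space, the diameter estimate that forces the convergence, and the density of $D$ --- are routine.
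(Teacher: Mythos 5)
Your proof is correct and follows essentially the same route as the paper's: take locally finite open refinements of ball covers of mesh $\tfrac1n$ (via paracompactness), note that open sets in a metrizable space are co-zero, pick the sample points from $D$ by density, and conclude convergence from the diameter bound. The only cosmetic differences are your use of radius $\tfrac{1}{2n}$ to make the diameter estimate explicit and your remark about discarding empty members of the refinement.
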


\begin{proof}
  Let $X$ be a metrizable space and $d$ a metric on $X$ which generates its topology. Fix an arbitrary dense set $D$ in $X$. For every $n\in\mathbb N$ let ${\mathcal B}_n$ be a cover of $X$ by open balls {of diameter} $\frac 1n$. Since $X$ is paracompact, for every $n$ there exists a locally finite cover \mbox{${\mathcal U}_n=(U_{i,n}:i\in I_n)$} of $X$ by open sets $U_{i,n}$ such that ${\mathcal U_n}\preceq {\mathcal B}_n$. Notice that each $U_{i,n}$ is a co-zero set. Choose a point $x_{i,n}\in D\cap U_{i,n}$ for all $n\in\mathbb N$ and $i\in I_n$. Let $x\in X$ and let $U$ be an arbitrary neighborhood of $x$. Then there is $n_0\in\mathbb N$ such that $B(x,\frac 1n)\subseteq U$ for all $n\ge n_0$. Fix $n\ge n_0$ and take $i\in I_n$ such that $x\in U_{i,n}$. Since ${\rm diam} \, U_{i,n}\le\frac 1n$, $d(x,x_{i,n})\le \frac 1n$, consequently, $x_{i,n}\in U$.
\end{proof}

\begin{example}
The Sorgenfrey line $\mathbb L$ is a strong PP-space which is not metrizable.
\end{example}

\begin{proof} Recall that the Sorgenfrey line is the real line $\mathbb R$ endowed with the topology generated by the base consisting of all semi-intervals $[a,b)$, where $a<b$ (see \cite[Example 1.2.2]{Eng}).

Let $D\subseteq \mathbb L$ be a dense set. For any $n\in\mathbb N$ and $i\in \mathbb Z$ {by $\varphi_{i,n}$ we denote} the characteristic function of $[\frac{i-1}{n},\frac{i}{n})$ and choose a point $x_{i,n}\in [\frac{i}{n},\frac{i+1}{n})\cap D$. Then the sequences  $\bigl((\varphi_{i,n}:i\in I_n) \bigr)_{n=1}^\infty$ and $\bigl((x_{i,n}:i\in I_n) \bigr)_{n=1}^\infty$ satisfy (\ref{eq:3}).
\end{proof}

\begin{proposition}\label{pr:3}
  Every $\sigma$-metrizable paracompact space is a PP-space.
\end{proposition}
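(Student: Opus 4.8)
The plan is to verify Definition~\ref{def:1}. Write $X=\bigcup_{m=1}^\infty X_m$, where we may assume that $(X_m)_{m=1}^\infty$ is an increasing sequence of closed metrizable subspaces covering $X$; fix for each $m$ a metric $d_m$ generating the topology of $X_m$. Since $X$ is paracompact, every open cover of $X$ has a locally finite refinement by co-zero sets (pass to a subordinate partition of unity and take the positivity sets of its members). I would construct a sequence $(\mathcal U_n)_{n=1}^\infty$ of such covers, $\mathcal U_n=(U_{i,n}:i\in I_n)$, together with points $x_{i,n}$, so that the following \emph{trace condition} holds: for each fixed $m$ and each $n\ge m$ one has $\operatorname{diam}_{d_m}(U\cap X_m)<\tfrac1n$ for every $U\in\mathcal U_n$.

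To build $\mathcal U_n$ I would, for each $m\le n$, cover $X_m$ by the family $\mathcal A_{m,n}$ of all $d_m$-balls of radius $\tfrac1{2n}$. Each such ball is relatively open, hence of the form $G\cap X_m$ for some open $G\subseteq X$, which I fix and call its extension. Since $X_m$ is closed, the family $\mathcal G_{m,n}$ consisting of all these extensions together with the open set $X\setminus X_m$ is an open cover of $X$ whose trace on $X_m$ is $\mathcal A_{m,n}$, and therefore has $d_m$-mesh $<\tfrac1n$. Taking the common refinement $\bigwedge_{m\le n}\mathcal G_{m,n}$, which is a finite meet of open covers and hence again an open cover of $X$, and then a locally finite co-zero refinement $\mathcal U_n$ furnished by paracompactness, I obtain covers with the trace condition: every $U\in\mathcal U_n$ lies, for each $m\le n$, inside a single member of $\mathcal G_{m,n}$, so $U\cap X_m$ is contained in a single $d_m$-ball of radius $\tfrac1{2n}$.

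It remains to choose the points and to check~(\ref{eq:2}). For $U\in\mathcal U_n$ let $m(U)$ be the least index with $U\cap X_{m(U)}\ne\emptyset$, and pick $x_{i,n}\in U_{i,n}\cap X_{m(U_{i,n})}$. Fix $x\in X$ and let $m_0$ be the least index with $x\in X_{m_0}$; given a neighbourhood $V$ of $x$, choose $\rho>0$ with $B_{d_{m_0}}(x,\rho)\subseteq V\cap X_{m_0}$ and $N\ge m_0$ with $\tfrac1N<\rho$. If $n\ge N$ and $x\in U_{i,n}$, then $U_{i,n}$ meets $X_{m_0}$, so $m(U_{i,n})\le m_0$ and $x_{i,n}\in U_{i,n}\cap X_{m(U_{i,n})}\subseteq U_{i,n}\cap X_{m_0}$; since this trace has $d_{m_0}$-diameter $<\tfrac1n<\rho$ and also contains $x$, we get $x_{i,n}\in B_{d_{m_0}}(x,\rho)\subseteq V$. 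Hence for every thread with $x\in U_{i_n,n}$ for all $n$ the points $x_{i_n,n}$ lie in $V$ once $n\ge N$, which gives $x_{i_n,n}\to x$ and proves~(\ref{eq:2}).

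The one genuinely delicate point is that the convergence must take place in $X$, whose topology is governed by no single metric $d_m$; this is exactly what the least-level choice of $x_{i,n}$ repairs, since it forces the relevant approximating points into the same metrizable stratum $X_{m_0}$ as $x$, where $d_{m_0}$-smallness does translate into membership in an arbitrary $X$-neighbourhood of $x$. By contrast, the closedness of the $X_m$ is used only to make $X\setminus X_m$ available as an open set when extending the ball covers, and paracompactness only to pass to locally finite co-zero refinements, so both of these are routine.
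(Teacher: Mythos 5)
Your proof is correct, and it follows the same overall strategy as the paper's: extend relatively open ball covers of the strata $X_m$ to open covers of $X$, pass to locally finite co-zero refinements by paracompactness, and choose each point $x_{i,n}$ in the lowest stratum met by the corresponding cover element, so that for $x\in X_{m_0}$ the approximating points land in $X_{m_0}$ itself, where metric smallness does control the topology. The genuine difference is how the two arguments arrange for the traces $U\cap X_m$ to be metrically small. The paper first invokes Hausdorff's extension theorem to replace the arbitrary metrics $d_m$ by a coherent sequence with $d_{m+1}|_{X_m}=d_m$, so that a single pseudo-distance measures smallness on every stratum at once; you instead keep unrelated metrics $d_m$ and take, for each $n$, the finite common refinement of the extended ball covers of $X_1,\dots,X_n$ (each padded with the open set $X\setminus X_m$), which bounds the $d_m$-mesh of the trace on $X_m$ for every $m\le n$ separately --- and that is all the convergence argument needs, since only the stratum $X_{m_0}$ of the limit point matters and $m_0\le n$ eventually. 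This buys you independence from the metric extension theorem and a cleaner diameter estimate: the paper's bound $\mathrm{diam}\,\mathrm{supp}\,h_{i_n,n}\le\frac1n$ is really only valid for the trace of the support on the relevant stratum, because the extension $V_B$ is uncontrolled off $X_m$, a point your explicit trace condition handles; the only cost is the bookkeeping of the finite meet.
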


\begin{proof}
  Let $X=\bigcup\limits_{n=1}^\infty X_n$, where $(X_n)_{n=1}^\infty$ is an increasing sequence of closed metrizable subspaces, and let $d_1$ be a metric on $X_1$ which generates its topology. According to Hausdorff's theorem \cite[p.~297]{Eng} we can extend the metric $d_1$ to a metric $d_2$ on $X_2$. Further, we extend the metric $d_2$ to a metric $d_3$ on $X_3$. Repeating this process, we obtain {a} sequence $(d_n)_{n=1}^\infty$ of metrics $d_n$ on $X_n$ such that $d_{n+1}|_{X_n}=d_n$ for every $n\in\mathbb N$. We define {a} function $d:X^2\to\mathbb R$ {by setting} $d(x,y)=d_n(x,y)$ {for} $(x,y)\in X_n^2$.

  Fix $n\in\mathbb N$ and $m\ge n$. Let ${\mathcal B}_{n,m}$ be a cover of $X_m$ by $d$-open balls of diameter $\frac 1 n$. For every $B\in {\mathcal B}_{n,m}$ there exists an open set $V_B$ in $X$ such that $V_B\cap X_m=B$. Let ${\mathcal V}_{n,m}=\{V_B:B\in {\mathcal B}_{n,m}\}$ and ${\mathcal U}_n=\bigcup\limits_{m=1}^\infty {\mathcal V}_{n,m}$. Then $\mathcal U_n$ is an open cover of $X$ for every $n\in\mathbb N$. Since $X$ is paracompact, for every $n\in\mathbb N$ there exists a locally finite partition of unity $(h_{i,n}:i\in I_n)$ on $X$ subordinated to $\mathcal U_n$. For every $n\in\mathbb N$ and $i\in I_n$ we choose $x_{i,n}\in X_{k(i,n)}\cap {\rm supp}\, h_{i,n}$, where $k(i,n)=\min\{m\in\mathbb N:X_m\cap {\rm supp}\, h_{i,n}\ne\O\}$.

  Now fix $x\in X$. Let $(i_n)_{n=1}$ be a sequence of indexes $i_n\in I_n$ such that $x\in {\rm supp}\, h_{i_n,n}$.  We choose $m\in\mathbb N$ such that $x\in X_m$. It is easy to see that $k(i_n,n)\le m$ for every $n\in\mathbb N$. Then $x_{i_n,n}\in X_m$. Since $d_m(x_{i_n,n},x)\le {\rm diam}\,{\rm supp\,}h_{i_n,n}\le\frac 1n$, $x_{i_n,n}\to x$ in $X_m$. Therefore, $x_{i_n,n}\to x$ in~$X$.
\end{proof}

Denote by $\mathbb R^\infty$ the collection of all sequences with a finite support, i.e. sequences of the form $(\xi_1,\xi_2,\dots,\xi_n,0,0,\dots)$, where $\xi_1,\xi_2,\dots,\xi_n\in\mathbb R$. Clearly, $\mathbb R^\infty$ is a linear subspace of the space $\mathbb R^{\mathbb N}$ of all sequences. Denote by $E$ the set of all sequences $e=(\varepsilon_n)_{n=1}^\infty$ of positive reals $\varepsilon_n$ and let $$U_e=\{x=(\xi_n)_{n=1}^\infty\in\mathbb R^\infty: (\forall n\in\mathbb N)(|\xi_n|\le \varepsilon_n)\}.$$
We consider on $\mathbb R^\infty$ the topology in which the system $\mathcal U_0=\{U_e:e\in E\}$ forms the base of neighborhoods of zero. Then $\mathbb R^\infty$ equipped with this topology is {a} locally convex $\sigma$-metrizable paracompact space which is not a first countable space, consequently, non-metrizable.

\begin{example}
  The space ${\mathbb R}^\infty$ is a PP-space which is not {a} strong PP-space.
\end{example}

\begin{proof}
 Remark that $\mathbb R^\infty$ is a PP-space by Proposition~\ref{pr:3}.

 We show that $\mathbb R^\infty$ is not a strong PP-space. Indeed, let
 $$
 A_n=\{(\xi_1,\xi_2,\dots,\xi_n,0,0,\dots):|\xi_k|\le \frac 1n\,\,(\forall 1\le k\le n)\},
 $$
 $$
 D=\bigcup\limits_{m=1}^\infty\bigcap\limits_{n=1}^m ({\mathbb R}^\infty\setminus A_n).
 $$
 Then $D$ is dense in $\mathbb R^\infty$, but there is no sequence in $D$ which converges to $x=(0,0,0,\dots)\in\mathbb R^\infty$. Hence, $\mathbb R^\infty$ is not a strong PP-space.
\end{proof}

\section{The Lebesgue classification}\label{sec:Leb}

The following result is an analog of theorems of K.~Kuratowski~\cite{Ku2} and D.~Montgomery~\cite{Mont} who proved that every separately continuous function, defined on a product of a metrizable space and a topological space and with values in a metrizable space, belongs to the first Baire class.

\begin{theorem}\label{th:2} Let $X$ be a strong PP-space, $Y$ a topological space, $Z$ a perfectly normal space and $0\le\alpha<\omega_1$. Then $$
C\overline{H_{\alpha}^*}(X\times Y,Z)\subseteq H_{\alpha+1}^*(X\times Y,Z). $$ \end{theorem}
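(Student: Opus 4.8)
The plan is to run the Kuratowski--Montgomery scheme in the form dictated by the strong PP-structure of $X$, reading $f\in C\overline{H_\alpha^*}(X\times Y,Z)$ (by analogy with $C\overline{C}$) as: $f_y\in C(X,Z)$ for every $y\in Y$, while $f^x\in H_\alpha^*(Y,Z)$ only for every $x$ in some dense set $D\subseteq X$. It suffices to show that $f^{-1}(F)$ is functionally multiplicative of class $\alpha+1$ for every closed $F\subseteq Z$ (equivalently, $f^{-1}(W)$ is functionally additive of class $\alpha+1$ for open $W$, by complementation). Since $Z$ is perfectly normal, fix a closed $F$ and choose open $V_m$ with $\overline{V_{m+1}}\subseteq V_m$ and $F=\bigcap_m\overline{V_m}=\bigcap_m V_m$. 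Here the \emph{strong} PP-property is essential: applying Definition \ref{def:2} \emph{to the given dense set} $D$ yields locally finite partitions of unity $(\varphi_{i,n})_{i\in I_n}$ on $X$ together with points $x_{i,n}\in D$ satisfying (\ref{eq:3}), so that each section $f^{x_{i,n}}$ lies in $H_\alpha^*(Y,Z)$. Put $U_{i,n}=\{x\in X:\varphi_{i,n}(x)>0\}$; these are co-zero sets covering $X$.

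The first step is to prove the master identity
$$
f^{-1}(F)=\bigcap_{m=1}^{\infty}\bigcap_{N=1}^{\infty}\bigcup_{n=N}^{\infty}G_n^{(m)},\qquad
G_n^{(m)}=\bigcup_{i\in I_n}\Bigl(U_{i,n}\times (f^{x_{i,n}})^{-1}(V_m)\Bigr).
$$
For the inclusion ``$\subseteq$'', fix $(x,y)$ with $f(x,y)\in F$ and fix $m$; choosing for every $n$ an index $i_n$ with $x\in U_{i_n,n}$, condition (\ref{eq:3}) gives $x_{i_n,n}\to x$, so continuity of $f_y$ yields $f(x_{i_n,n},y)\to f(x,y)\in V_m$, whence $(x,y)\in G_n^{(m)}$ for all large $n$. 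For ``$\supseteq$'', if $(x,y)$ lies in the right-hand side, then for each $m$ there are infinitely many $n$ with indices $i_n$ such that $x\in U_{i_n,n}$ and $f(x_{i_n,n},y)\in V_m$; completing $(i_n)$ to a sequence with $x\in U_{i_n,n}$ for \emph{all} $n$ and invoking (\ref{eq:3}) once more, the corresponding subsequence $f(x_{i_n,n},y)\to f(x,y)$ forces $f(x,y)\in\overline{V_m}$ for every $m$, i.e. $f(x,y)\in F$.

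Granting that each $G_n^{(m)}$ is functionally additive of class $\alpha$, the countable union $\bigcup_{n\ge N}G_n^{(m)}$ stays functionally additive of class $\alpha$, and the ensuing countable intersection over $m$ and $N$ is functionally multiplicative of class $\alpha+1$; thus $f^{-1}(F)$ is of that class and $f\in H_{\alpha+1}^*(X\times Y,Z)$. To estimate $G_n^{(m)}$, observe that $f^{x_{i,n}}\in H_\alpha^*(Y,Z)$ makes $Y\setminus (f^{x_{i,n}})^{-1}(V_m)$ functionally multiplicative of class $\alpha$, so Lemma \ref{l:3} provides $g_{i,n}\in B_\alpha(Y,[0,1])$ with $(f^{x_{i,n}})^{-1}(V_m)=\{y:g_{i,n}(y)>0\}$. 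Since all summands below are nonnegative,
$$
G_n^{(m)}=\Bigl\{(x,y):\ \textstyle\sum_{i\in I_n}\varphi_{i,n}(x)\,g_{i,n}(y)>0\Bigr\}=\bigl(H_n^{(m)}\bigr)^{-1}\!\bigl((0,1]\bigr),\qquad H_n^{(m)}=\sum_{i\in I_n}\varphi_{i,n}(x)\,g_{i,n}(y).
$$

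The decisive point, and the step I expect to be the main obstacle, is that the locally finite sum $H_n^{(m)}$ belongs to $B_\alpha(X\times Y,[0,1])$. I would prove this by induction on $\alpha$: local finiteness of $(\varphi_{i,n})_i$ makes the sum locally a \emph{finite} sum, so the base case is exactly the fact that a locally finite sum of continuous functions is continuous, while the inductive step passes to pointwise limits of lower-class approximants of the $g_{i,n}$. Once $H_n^{(m)}\in B_\alpha(X\times Y,[0,1])$, Lemma \ref{l:45} (with the perfectly normal range $[0,1]$) gives that $G_n^{(m)}=(H_n^{(m)})^{-1}((0,1])$ is functionally additive of class $\alpha$, which completes the proof for finite $\alpha$. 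For infinite $\alpha$ the same scheme applies, but the class count must be tracked through the limit ordinals with the finite/infinite shift already visible in Lemma \ref{l:45}; keeping that bookkeeping aligned with the target class $\alpha+1$ is the delicate remaining point.
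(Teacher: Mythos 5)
Your skeleton is the same as the paper's: apply the strong PP-property to the dense set of points $x$ with $f^{x}\in H_\alpha^*(Y,Z)$, use perfect normality of $Z$ to write $F=\bigcap_m \overline{G}_m$ with $\overline{G}_{m+1}\subseteq G_m$, prove the master identity $f^{-1}(F)=\bigcap_m\bigcup_{n\ge m}\bigcup_{i\in I_n}U_{i,n}\times(f^{x_{i,n}})^{-1}(G_m)$ via continuity of $f_y$ and condition (\ref{eq:2}) (your extra $\bigcap_N$ is an equivalent reformulation, and your completion of the partial index sequence to a full one is exactly the step the paper glosses over), and then count classes. The one place you genuinely diverge is the estimate of $G_n^{(m)}=\bigcup_{i\in I_n}U_{i,n}\times(f^{x_{i,n}})^{-1}(G_m)$: the paper simply invokes \cite[Theorem~1.5]{K4} for the fact that a union of products of a locally finite co-zero cover with functionally additive class $\alpha$ sets is again functionally additive of class $\alpha$, whereas you re-derive this by encoding $G_n^{(m)}$ as the strict positivity set of the locally finite sum $H_n^{(m)}=\sum_i\varphi_{i,n}(x)g_{i,n}(y)$ with $g_{i,n}$ supplied by Lemma~\ref{l:3}. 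That is a legitimate, self-contained substitute, and for finite $\alpha$ (in particular $\alpha=0,1$, which is all that Theorem~\ref{th:3} needs) it closes the argument, modulo writing out the nested induction that a locally finite sum of class-$\alpha$ functions weighted by a continuous partition of unity is again of class $\alpha$.

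For infinite $\alpha$, however, your route does not deliver the stated conclusion, and the problem is not mere bookkeeping. First, even granting $H_n^{(m)}\in B_\alpha(X\times Y,[0,1])$, Lemma~\ref{l:45} only places $H_n^{(m)}$ in $H^*_{\alpha+1}$, so $G_n^{(m)}$ comes out functionally additive of class $\alpha+1$ and the final intersection over $m$ lands in the multiplicative class $\alpha+2$, overshooting the theorem's claim of $H^*_{\alpha+1}$. Second, at a limit ordinal $\alpha$ the inductive step for $H_n^{(m)}$ is itself problematic: the approximants $g_{i,n}^{(k)}$ have classes $\xi_{i,k}<\alpha$ that may be cofinal in $\alpha$ as $i$ ranges over the (possibly uncountable) index set $I_n$, so the $k$-th partial approximation of the sum need not be of any single class $<\alpha$. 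The paper sidesteps both issues by working directly with the sets via \cite[Theorem~1.5]{K4} rather than passing through Baire classes of functions; if you want your version to cover infinite $\alpha$, you should do the same, i.e.\ prove the set-level statement (intersection of a co-zero set with a class-$\alpha$ additive set is class-$\alpha$ additive, and locally finite unions subordinated to a co-zero cover preserve the class) instead of routing through Lemma~\ref{l:3} and Lemma~\ref{l:45}.
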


\begin{proof}
  Let $f\in C\overline{H_{\alpha}^*}(X\times Y,Z)$. Then for the set $X_{H_\alpha^*}(f)$ there exist a sequence $(\mathcal U_n)_{n=1}^\infty$ of  locally finite covers $\mathcal U_n=(U_{i,n}:i\in I_n)$ of $X$ by co-zero sets $U_{i,n}$ and a sequence $((x_{i,n}:i\in I_n))_{n=1}^\infty$ of families of points of the set $X_{H_\alpha^*}(f)$ {satisfying} condition~(\ref{eq:2}).

  We choose an arbitrary closed set $F\subseteq Z$. Since $Z$ is perfectly normal, $F=\bigcap\limits_{m=1}^\infty
  {G_m}$, where $G_m$ are open sets in $Z$ such that $\overline{G}_{m+1}\subseteq G_m$ for every $m\in\mathbb N$. Let us verify that the equality
    \begin{equation}\label{eq:4}
     f^{-1}(F)=\bigcap\limits_{m=1}^\infty \bigcup\limits_{n\ge m}^\infty\bigcup\limits_{i\in I_n}U_{i,n}\times (f^{x_{i,n}})^{-1}(G_m).
    \end{equation}
  holds.
  Indeed, let $(x_0,y_0)\in f^{-1}(F)$. Then $f(x_0,y_0)\in G_m$ for every $m\in\mathbb N$. Fix any $m\in\mathbb N$. Since $V_m=f_{y_0}^{-1}(G_m)$ is {an} open neighborhood of $x_0$, there exists a number $n_0\ge m$ such that for all $n\ge n_0$ and $i\in I_n$ the inclusion $x_{i,n}\in V_m$ holds whenever $x_0\in U_{i,n}$. We choose $i_0\in I_{n_0}$ such that $x_0\in U_{i_0,n_0}$. Then $f(x_{i_0,n_0},y_0)\in G_m$. Hence, $(x_0,y_0)$ belongs to the {right-hand} side of (\ref{eq:4}).

  Conversely, let $(x_0,y_0)$ {belong} to the {right-hand} side of (\ref{eq:4}). Fix $m\in\mathbb N$. We choose sequences $(n_k)_{k=1}^\infty$, $(m_k)_{k=1}^\infty$ of numbers $n_k,m_k\in\mathbb N$ and a sequence $(i_k)_{k=1}^\infty$ of indexes $i_k\in I_{n_k}$ such that
  $$
  m=m_1\le n_1<m_2\le n_2<\dots<m_k\le n_k<\dots,
  $$
  $$x_0\in U_{i_k,n_k}\quad \mbox{and}\quad f(x_{i_k,n_k},y_0)\in G_{m_k}\subseteq G_m\quad \mbox{for every}\,k\in\mathbb N.$$
Since $\lim\limits_{k\to\infty}x_{i_k,n_k}=x_0$ and the function $f$ is continuous with respect to the first variable, $\lim\limits_{k\to\infty}f(x_{i_k,n_k},y_0)=f(x_0,y_0)$. Therefore, $f(x_0,y_0)\in\overline{G}_m$ for every $m\in\mathbb N$. Hence, $(x_0,y_0)$ belongs to the {left-hand} side of (\ref{eq:4}).

Since $f^{x_{i,n}}\in H_{\alpha}^*(Y,Z)$, the sets $(f^{x_{i,n}})^{-1}(G_m)$ are of the functionally additive class  $\alpha$ in $Y$. Moreover, all $U_{i,n}$ are co-zero sets in $X$, consequently, by \cite[Theorem~1.5]{K4} the set $E_n=\bigcup\limits_{i\in I_n}U_{i,n}\times
(f^{x_{i,n}})^{-1}(G_m)$ belongs to the $\alpha$'th  functionally additive {class} for every $n$. Therefore, $\bigcup\limits_{n\ge m} E_n$ is of the $\alpha$'th functionally additive {class}. Hence, $f^{-1}(F)$ is of the $(\alpha+1)$'th functionally multiplicative {class} in $X\times Y$.\hfill$\Box$
 \end{proof}

\begin{definition}{\rm
  We say that a topological space $X$ {\it  has {the} (strong) L-property} or {\it is {a} (strong) L-space}, if for every topological space  $Y$ every (nearly vertically) separately continuous function $f:X\times Y\to\mathbb R$ is of the first Lebesgue class.}
\end{definition}

According to Theorem~\ref{th:2} every strong PP-space has {the} strong L-property.

\begin{proposition}
  Let $X$ be a completely regular strong L-space. Then for any dense set $A\subseteq X$ and a point $x_0\in X$ there exists a countable dense set  $A_0\subseteq A$ such that $x_0\in\overline{A_0}$.
\end{proposition}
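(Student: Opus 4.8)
The plan is to argue by contradiction, converting a failure of the conclusion into a nearly vertically separately continuous real-valued function which is not of the first Lebesgue class, thereby contradicting the strong L-property. So suppose that for some dense $A\subseteq X$ and some $x_0\in X$ we have $x_0\notin\overline{A_0}$ for every countable $A_0\subseteq A$. Since $A$ is dense we have $x_0\in\overline A$, and one checks at once that $x_0\notin A$, that $A$ is uncountable, and (combining density with the failure) that every neighbourhood of $x_0$ meets $A$ in an uncountable set. Let $\lambda$ be the least cardinality of a subset $S\subseteq A$ with $x_0\in\overline S$; by the hypothesis $\lambda\geq\aleph_1$, and I will want to take $\lambda$ regular. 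The decisive consequence of minimality is that \emph{every} subset of $A$ of cardinality $<\lambda$ is separated from $x_0$, which is what will feed the recursion below.

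Next I build the test space and the function. Let $Y=A\cup\{\infty\}$, where each point of $A$ is isolated and the neighbourhoods of $\infty$ are the sets $\{\infty\}\cup(A\setminus P)$ with $|P|<\lambda$; note that $Y$ is only required to be a topological space, so no separation axioms are needed. Enumerate a suitable subset of $A$ as $\{a_\xi:\xi<\lambda\}$. For each $\xi<\lambda$ the initial segment $\{a_\eta:\eta<\xi\}$ has cardinality $<\lambda$, hence $x_0\notin\overline{\{a_\eta:\eta<\xi\}}$; setting $W_\xi=X\setminus\overline{\{a_\eta:\eta<\xi\}}$ (an open neighbourhood of $x_0$) and using complete regularity, I obtain a continuous $q_\xi:X\to[0,1]$ with $q_\xi(x_0)=1$ and $q_\xi\equiv 0$ on $X\setminus W_\xi$. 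Define $f:X\times Y\to\mathbb R$ by $f(x,a_\xi)=q_\xi(x)$, by $f(x,a)=0$ for the remaining $a\in A$, and by $f(x,\infty)=0$.

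Every horizontal section $f_y$ is then continuous by construction. For the vertical sections, if $x=a_\eta$ lies in the enumerated set then $q_\xi(a_\eta)\neq 0$ forces $a_\eta\in W_\xi$, hence $\xi\leq\eta$; thus $\{\xi:q_\xi(a_\eta)\neq 0\}$ has cardinality $<\lambda$, so $f^{a_\eta}(a_\xi)\to 0=f^{a_\eta}(\infty)$ along the neighbourhood filter of $\infty$ and $f^{a_\eta}$ is continuous. Consequently $f$ has a large set of continuous vertical sections. Finally, $f$ is not of the first Lebesgue class: put $G=f^{-1}((\tfrac12,2))$. Since $q_\xi(x_0)=1$ for all $\xi$, the whole fibre $\{x_0\}\times\{a_\xi:\xi<\lambda\}$ lies in $G$, whereas $(x_0,\infty)\notin G$. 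If $G$ were functionally $F_\sigma$, say $G=\bigcup_{n}Z_n$ with each $Z_n$ closed, then, because $\lambda$ is regular and uncountable, some $Z_{n_0}$ would contain $\{x_0\}\times\{a_\xi:\xi\in T\}$ for a set $T$ of cardinality $\lambda$; since every neighbourhood of $\infty$ omits fewer than $\lambda$ points of $A$, the point $(x_0,\infty)$ lies in the closure of this subfibre, hence in $Z_{n_0}\subseteq G$, a contradiction. Therefore $G\notin\mathcal F_1^*$ and $f\notin H_1^*$, contradicting the strong L-property.

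The hardest part is to guarantee that the set of continuous vertical sections is \emph{dense} in $X$ (so that $f$ is genuinely nearly vertically separately continuous), since the preceding verification only handled the enumerated points $a_\eta$. The tension is intrinsic: the supports $W_\xi$ must cluster at $x_0$ strongly enough to fill the fibre $\{x_0\}\times\{a_\xi\}$ and wreck any $F_\sigma$-representation of $G$, yet be spread out enough that a dense set of points meets fewer than $\lambda$ of them. I expect to resolve this by enumerating a dense subset of $A$ of cardinality exactly $\lambda$ in such a way that every proper initial segment avoids $x_0$ (which the minimality of $\lambda$ makes automatic), so that the good-section set contains this dense subset; the accompanying point is the verification that $\lambda$ may be taken regular. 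This cardinal bookkeeping — reconciling density with the fibre-filling — is where the real work of the proof lies.
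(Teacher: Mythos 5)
Your construction is ingenious, but the step you yourself flag as ``the real work'' is a genuine gap, and the repair you sketch does not go through in general. The set of points with continuous vertical sections that your argument actually delivers is (essentially) $\bigcup_{\xi<\lambda}\overline{\{a_\eta:\eta<\xi\}}$: for $x$ in this union all but fewer than $\lambda$ of the functions $q_\xi$ vanish at $x$, but for $x$ outside it you have no control whatsoever over $\{\xi: q_\xi(x)\ne 0\}$, and this union need not be dense in $X$. Your proposed fix is to enumerate a \emph{dense} subset of $A$ of cardinality exactly $\lambda$; no such set exists when the density of $X$ exceeds $\lambda$ (take $X$ to be the topological sum of a huge discrete space and a small piece containing $x_0$), and enlarging the index set to a dense set of larger cardinality destroys the minimality property on which the existence of the neighbourhoods $W_\xi$ --- hence of the functions $q_\xi$ with $q_\xi(x_0)=1$ --- depends. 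A second, unaddressed point: the pigeonhole step (from $G=\bigcup_n Z_n$ to a single $Z_{n_0}$ containing a subfibre of size $\lambda$) needs $\mathrm{cf}(\lambda)>\omega$, and the least cardinality of a subset of $A$ clustering at $x_0$ need not be regular, nor even of uncountable cofinality: one can have $x_0\in\overline{\bigcup_n S_n}$ with $x_0\notin\overline{S_n}$ and $|S_n|=\lambda_n$, $\lambda=\sup_n\lambda_n$. So as written the proof does not close.

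For comparison, the paper avoids all of this cardinal bookkeeping by choosing the test space globally rather than assembling it from a minimal witnessing set: $Y$ is $C(X,\mathbb R)$ with the topology of pointwise convergence \emph{on $A$}, and the test function is the evaluation map $e(x,y)=y(x)$. Every horizontal section is literally a continuous function on $X$, and $e^x$ is continuous for every $x\in A$, so $e\in C\overline{C}(X\times Y,\mathbb R)$ with no case analysis and no density worries. The strong L-property then makes $B_0=\{y:y(x_0)=0\}$ a $G_\delta$ slice in $Y$; a countable family of basic neighbourhoods of the zero function whose intersection lies in $B_0$ singles out countably many points of $A$, and complete regularity shows $x_0$ must be in their closure. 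If you wish to keep your contradiction scheme you would need, at minimum, a new idea to manufacture a dense set of continuous vertical sections; otherwise I recommend switching to the evaluation-map argument.
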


\begin{proof}
Fix an arbitrary everywhere dense set $A\subseteq X$ and a point $x_0\in A$. Let $Y$ be {the} space of all real-valued continuous functions on  $X$, endowed with {the} topology of pointwise convergence on $A$. Since the evaluation function \mbox{$e:X\times Y\to \mathbb R$}, $e(x,y)=y(x)$, is nearly vertically separately continuous, $e\in H_1(X\times Y,{\mathbb R})$. Then $B=e^{-1}(0)$ is $G_\delta$-set in $X\times Y$. Hence, $B_0=\{y\in Y: y(x_0)=0\}$ is {a} $G_\delta$-set in $Y$. We set $y_0\equiv 0$ and choose a sequence $(V_n)_{n=1}^\infty$ of basic neighborhoods of $y_0$ in $Y$ such that $\bigcap\limits_{n=1}^\infty V_n\subseteq B_0$. For every  $n$ there exist a finite set $\{x_{i,n}:i\in I_n\}$ of $X$ and $\varepsilon_n>0$ such that $V_n=\{y\in Y:\max\limits_{i\in I_n}|y(x_{i,n})|<\varepsilon_n\}$. Let
$$
A_0=\bigcup\limits_{n\in{\mathbb N}}\bigcup\limits_{i\in I_n}\{x_{i,n}\}.
$$
Take an open neighborhood $U$ of $x_0$ in $X$ and suppose that $U\cap A_0=\O$. Since $X$ is completely regular and $x_0\not\in X\setminus U$, there exists a continuous function $y:X\to \mathbb R$ such that $y(x_0)=1$ and $y(X\setminus U)\subseteq\{0\}$. Then $y\in \bigcap\limits_{n=1}^\infty V_n$, but $y\not\in B_0$, a contradiction. Therefore, $U\cap A_0\ne\O$, and $x_0\in \overline{A_0}$.
\end{proof}

\section{Baire classification and $\sigma$-metrizable spaces}\label{sec:Baire}

We recall that a topological space $Y$ is {\it $B$-favorable for a space $X$}, if $H_1(X,Y)\subseteq B_1(X,Y)$ (see \cite{KM11}).

 \begin{definition}{\rm
  Let $0\le\alpha<\omega_1$. A topological space $Y$ is called {\it weakly $B_{\alpha}$-favorable for a space $X$}, if
 $H_\alpha^*(X,Y)\subseteq B_\alpha(X,Y)$.}
 \end{definition}

Clearly, every $B$-favorable space is weakly $B_1$-favorable.

\begin{proposition}\label{pr:2} Let $0\le\alpha<\omega_1$, {let} $X$ be a topological space, $Y=\bigcup\limits_{n=1}^\infty Y_n$ a contractible space,
let $f:X\to Y$ be a mapping, $(X_n)_{n=1}^\infty$ a sequence of sets of the $\alpha$'th functionally additive {class} such that  $X=\bigcup\limits_{n=1}^\infty X_n$ and $f(X_n)\subseteq Y_n$ for every $n\in\mathbb N$. If one of the following conditions holds

\quad (i)  $Y_n$ is {a} nonempty weakly $B_\alpha$-favorable space for $X$ for all $n$ and $f\in H_\alpha^*(X,Y)$, or

\quad (ii) $\alpha>0$ and for every $n$ there exists a function $f_n\in B_{\alpha}(X,Y_n)$ such that  $f_n|_{X_n}=f|_{X_n}$,

\noindent then $f\in B_\alpha(X,Y)$.
\end{proposition}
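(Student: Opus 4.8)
The plan is to reduce both hypotheses to a single gluing problem over a disjoint partition of $X$ into functionally ambiguous sets, and then to resolve that problem by a diagonal construction refining Lemma~\ref{l:4}.

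First I would dispose of the case $\alpha=0$, which can only occur under~(i): there $f\in H_0^*(X,Y)$ means that the preimage of every open set is co-zero, hence open, so $f$ is continuous and $f\in B_0(X,Y)$. Assume now $\alpha\ge 1$. Applying the (classical) reduction property of the $\alpha$th functionally additive class to the cover $(X_n)$, I would obtain pairwise disjoint sets $X_n'$ of the $\alpha$th functionally additive class with $X_n'\subseteq X_n$ and $\bigcup_n X_n'=X$. Since the $X_n'$ are disjoint and cover $X$, each complement $X\setminus X_n'=\bigcup_{m\ne n}X_m'$ is again of the $\alpha$th functionally additive class, so every $X_n'$ is simultaneously functionally additive and multiplicative of class $\alpha$, i.e. functionally ambiguous; moreover $f(X_n')\subseteq f(X_n)\subseteq Y_n$.

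Next I would manufacture, in either case, functions $g_n\in B_\alpha(X,Y)$ with $g_n|_{X_n'}=f|_{X_n'}$. Under~(ii) simply take $g_n=f_n$, which already agrees with $f$ on $X_n\supseteq X_n'$. Under~(i), fix a point $y_n^*\in Y_n$ and let $\hat f_n\colon X\to Y_n$ be equal to $f$ on $X_n'$ and constantly $y_n^*$ off $X_n'$. Using that $X_n'$ is ambiguous of class $\alpha$ and $f\in H_\alpha^*(X,Y)$, the preimage $\hat f_n^{-1}(V)$ of an open $V\subseteq Y_n$ equals $W\cap X_n'$ or $(W\cap X_n')\cup(X\setminus X_n')$ for some $W$ of the $\alpha$th functionally additive class, and both are of that class; hence $\hat f_n\in H_\alpha^*(X,Y_n)$, and weak $B_\alpha$-favorability of $Y_n$ for $X$ yields $g_n:=\hat f_n\in B_\alpha(X,Y_n)\subseteq B_\alpha(X,Y)$.

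It remains to glue: $X=\bigsqcup_n X_n'$ into ambiguous sets, $g_n\in B_\alpha(X,Y)$, and $g_n=f$ on $X_n'$. A direct application of Lemma~\ref{l:4} to $X_1',\dots,X_k'$ gives $G_k\in B_\alpha(X,Y)$ coinciding with $f$ on $X_1'\cup\dots\cup X_k'$, and $G_k\to f$ pointwise; but this only shows $f\in B_{\alpha+1}(X,Y)$, one class too many. The real work, and the main obstacle, is to land in $B_\alpha$. For this I would write each $g_n$ as a pointwise limit $g_n=\lim_m g_n^{(m)}$ of functions of class $<\alpha$, use Lemma~\ref{l:3} to represent each switching set as $X_n'=h_n^{-1}(0)$ and approximate $h_n$ by functions of class $<\alpha$, and then imitate the iterated-contraction formula from the proof of Lemma~\ref{l:4} using only the finite data $g_1^{(k)},\dots,g_k^{(k)}$ and the approximate switches at stage $k$. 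This produces $G_k$ of class strictly below $\alpha$, and one shows that on each $X_{n_0}'$ the $n_0$th branch eventually wins, forcing $G_k\to f$ and hence $f\in B_\alpha(X,Y)$. The delicate points are to tune the switches so that, for fixed $x\in X_{n_0}'$, the combination selects $g_{n_0}^{(k)}(x)\to f(x)$ while suppressing the spurious branches, and to keep the diagonal classes $\xi_{n,m}<\alpha$ bounded away from $\alpha$ — the latter being a genuine difficulty when $\alpha$ is a limit ordinal. Controlling this two-parameter limit, instead of the naive single limit that costs one Baire class, is the crux.
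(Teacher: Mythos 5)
Your setup coincides with the paper's up to the last step: the reduction of the cover $(X_n)$ to disjoint functionally ambiguous pieces $E_n\subseteq X_n$ (the paper invokes Lemma~2.1 of \cite{K4} for exactly this), the retraction $f_n=f$ on $E_n$ and $\equiv y_n$ off $E_n$, which lies in $H_\alpha^*(X,Y_n)$ and hence in $B_\alpha(X,Y_n)$ in case (i), and the correct observation that a naive application of Lemma~\ref{l:4} to $E_1,\dots,E_k$ only yields $f\in B_{\alpha+1}(X,Y)$. You also correctly locate the crux. But your proposed resolution of that crux --- writing $E_n=h_n^{-1}(0)$ via Lemma~\ref{l:3}, approximating $h_n$ by functions of class $<\alpha$, and running the iterated contraction with ``approximate switches'' --- has a genuine gap. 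If at a point $x\in E_{n_0}$ the switching parameter at stage $k$ is only close to $0$ rather than equal to $0$, then $G_k(x)=\lambda(g_{n_0}^{(k)}(x),z_k,t_k)$ with $t_k\to 0$, and continuity of $\lambda$ gives $G_k(x)\to f(x)$ only if the spurious branch $z_k$ also converges, which there is no reason to expect. Since $Y$ is merely a contractible topological space, with no metric or uniformity in which ``closeness'' of values could be exploited, the convergence does not follow; you acknowledge the tuning problem but do not solve it, so the proof is incomplete precisely at the step you identify as the main obstacle.

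The paper's device is different and exact: each $E_n$, being of the $\alpha$'th functionally additive class, is written as an increasing union $E_n=\bigcup_{m}B_{n,m}$ of sets of classes $<\alpha$; one sets $F_{n,m}=B_{n,m}$ for $n\le m$ and $F_{n,m}=\emptyset$ otherwise, and at stage $m$ applies Lemma~\ref{l:4} \emph{exactly} to the finitely many disjoint lower-class sets $F_{1,m},\dots,F_{m,m}$ and the finitely many approximants $g_{1,m},\dots,g_{m,m}$ of classes $<\alpha$ (a common bound $<\alpha$ exists because only finitely many objects are involved at each stage --- this also disposes of your worry about limit ordinals). The resulting $g_m$ is of class $<\alpha$ and satisfies $g_m=g_{n,m}$ exactly on $F_{n,m}$; for fixed $x\in E_n$ one eventually has $x\in F_{n,m}$, whence $g_m(x)=g_{n,m}(x)\to f(x)$. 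In short: approximate the switching \emph{sets} from inside by lower-class sets and glue exactly, rather than gluing with approximate switching \emph{functions}. Your argument would be repaired by replacing your final step with this one.
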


\begin{proof} If $\alpha=0$ then the statement is obvious {in case} (i).

Let $\alpha>0$. By \cite[Lemma 2.1]{K4} there {exists} a sequence $(E_n)_{n=1}^\infty$ of disjoint functionally ambiguous sets of {the $\alpha$'th class} such that $E_n\subseteq X_n$ and $X=\bigcup\limits_{n=1}^\infty E_n$.

{In case} (i) for every $n$ we choose a point
$y_n\in Y_n$ and let
$$
f_n(x)=\left\{\begin{array}{ll}
                f(x), & \mbox{if}\,\,\, x\in E_n, \\
                y_n, & \mbox{if} \,\,\, x\in X\setminus E_n.
              \end{array}
\right.
$$
Since \mbox{$f\in H_\alpha^*(X,Y)$} and $E_n$ is functionally ambiguous set of the {$\alpha$'th class}, \mbox{$f_n\in H_\alpha^*(X,Y_n)$}. Then $f_n\in B_\alpha(X,Y_n)$ provided $Y_n$ is weakly $B_\alpha$-favorable for~$X$.

For every $n$ there exists a sequence of mappings
$g_{n,m}:X\to Y_n$ of classes $<\alpha$ such that $g_{n,m}(x)\mathop{\to}\limits_{m\to\infty} f_n(x)$ for every $x\in X$. In particular, $\lim\limits_{m\to\infty}
g_{n,m}(x)=f(x)$ on $E_n$. Since $E_n$ is of the $\alpha$-th functionally additive {class}, $E_n=\bigcup\limits_{m=1}^\infty B_{n,m}$, where
$(B_{n,m})_{m=1}^\infty$ is an increasing sequence of sets of functionally additive classes  $<\alpha$. Let $F_{n,m}=\O$ if $n>m$, and let
$F_{n,m}=B_{n,m}$ if $n\le m$. According to Lemma~\ref{l:4}, for every $m\in\mathbb N$ there exists a mapping $g_m:X\to Y$ of a class $<\alpha$ such that $g_m|_{F_{n,m}}=g_{n,m}$, since the system $\{F_{n,m}:n\in {\mathbb N}\}$ is finite for every  $m\in {\mathbb N}$.

It remains to prove that $g_m(x)\to f(x)$ on $X$. Let $x\in X$. We choose a number $n\in\mathbb N$ such that $x\in E_n$. Since the sequence
$(F_{n,m})_{m=1}^\infty$ is increasing, there exists a number $m_0$ such that $x\in F_{n,m}$ for all $m\ge m_0$. Then $g_m(x)=g_{n,m}(x)$ for all $m\ge m_0$. Hence, $\lim\limits_{m\to\infty} g_m(x)=\lim\limits_{m\to\infty}g_{n,m}(x)=f(x)$. Therefore, $f\in B_\alpha(X,Y)$.\hfill$\Box$
\end{proof}

\begin{definition}{\rm
  Let $\{X_n:n\in\mathbb N\}$ be a cover of a topological space $X$. We say that $(X,(X_n)_{n=1}^\infty)$ {\it has the property} $(*)$ if for every convergent sequence $(x_k)_{k=1}^\infty$ in $X$ there exists a number $n$ such that
  $\{x_k:k\in{\mathbb N}\}\subseteq X_n$.}
\end{definition}

\begin{proposition}\label{pr:4}
  Let $0\le\alpha<\omega_1$, {let} $X$ be a strong PP-space, $Y$ a topological space, {let} $(Z,(Z_n)_{n=1}^\infty)$ {have} the property $(*)$, {let} $Z_n$ {be} closed in $Z$ (and let $Z_n$ be a zero-set in $Z$ if $\alpha=0$) for every $n\in\mathbb N$, and \mbox{$f\in C\overline{B}_\alpha(X\times Y,Z)$}. Then there exists a sequence $(B_n)_{n=1}^\infty$ of sets of the $\alpha$'th /$(\alpha+1)$'th/ functionally multiplicative {class}  in $X\times Y$,  if $\alpha$ is finite /infinite/, such that $$\bigcup\limits_{n=1}^\infty B_n=X\times Y \qquad\mbox{and}\qquad f(B_n)\subseteq Z_n$$ for every  $n\in{\mathbb N}$. \end{proposition}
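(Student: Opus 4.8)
The plan is to define, for each $N\in\mathbb N$, a set $B_N$ whose membership encodes that the values of $f(\cdot,y_0)$ read off along the approximating points of the strong PP-structure all land in $Z_N$, and then to verify the three required properties — the covering, the image condition $f(B_N)\subseteq Z_N$, and the functional class — separately. First I would apply the strong PP-property to the dense set $X_{B_\alpha}(f)=\{x\in X:f^x\in B_\alpha(Y,Z)\}$ (dense by the hypothesis $f\in C\overline B_\alpha(X\times Y,Z)$), obtaining a sequence $(\mathcal U_n)_{n=1}^\infty$ of locally finite covers $\mathcal U_n=(U_{i,n}:i\in I_n)$ of $X$ by co-zero sets together with points $x_{i,n}\in X_{B_\alpha}(f)$ satisfying~(\ref{eq:2}); in particular $f^{x_{i,n}}\in B_\alpha(Y,Z)$ for all $i,n$. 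For $i\in I_n$ and $N\in\mathbb N$ I set
$$ P_{i,n,N}=\bigl((X\setminus U_{i,n})\times Y\bigr)\cup\bigl(X\times (f^{x_{i,n}})^{-1}(Z_N)\bigr), $$
so that $(x,y)\in P_{i,n,N}$ means exactly ``$x\in U_{i,n}\Rightarrow f(x_{i,n},y)\in Z_N$'', and define $B_N=\bigcap_{n=1}^\infty\bigcap_{i\in I_n}P_{i,n,N}$.

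The heart of the argument is the covering property $\bigcup_N B_N=X\times Y$. Fix $(x_0,y_0)$ and, for each $n$, let $J_n=\{i\in I_n:x_0\in U_{i,n}\}$, which is finite (by local finiteness) and nonempty (since $\mathcal U_n$ covers $X$). I would first prove a uniform convergence statement: for every neighbourhood $W$ of $f(x_0,y_0)$ there is $m$ with $f(x_{i,n},y_0)\in W$ for all $n\ge m$ and all $i\in J_n$. This I prove by contradiction: if it failed for some $W$, then, writing $V=f_{y_0}^{-1}(W)$ (open, since $f_{y_0}$ is continuous), on cofinally many levels $n$ there would be some $i\in J_n$ with $x_{i,n}\notin V$; choosing such an $i_n$ on those levels and an arbitrary $i_n\in J_n$ elsewhere produces a selection with $x_0\in U_{i_n,n}$ for all $n$ yet $x_{i_n,n}\not\to x_0$, contradicting~(\ref{eq:2}). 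Since each $J_n$ is finite, listing all the values $f(x_{i,n},y_0)$ ($n\in\mathbb N$, $i\in J_n$) in order of increasing $n$ yields a genuine sequence converging to $f(x_0,y_0)$; property~$(*)$ then supplies a single $N$ with $f(x_{i,n},y_0)\in Z_N$ for every $n$ and every $i\in J_n$, whence $(x_0,y_0)\in B_N$.

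The image inclusion $f(B_N)\subseteq Z_N$ is the easy converse of this: for $(x_0,y_0)\in B_N$, choosing any $i_n\in J_n$ gives $f(x_{i_n,n},y_0)\in Z_N$ with $x_{i_n,n}\to x_0$ by~(\ref{eq:2}), so $f(x_0,y_0)\in Z_N$ by continuity of $f$ in the first variable and closedness of $Z_N$. It then remains to compute the functional class of $B_N$, which parallels the proof of Theorem~\ref{th:2}. Passing to complements, $(X\times Y)\setminus\bigcap_{i\in I_n}P_{i,n,N}=\bigcup_{i\in I_n}U_{i,n}\times (f^{x_{i,n}})^{-1}(Z\setminus Z_N)$. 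By Lemma~\ref{l:45}, $f^{x_{i,n}}\in H_\beta^*(Y,Z)$ with $\beta=\alpha$ for finite $\alpha$ and $\beta=\alpha+1$ for infinite $\alpha$ (for $\alpha=0$ one instead uses directly that $f^{x_{i,n}}$ is continuous and $Z\setminus Z_N$ is co-zero, which is where the zero-set hypothesis on $Z_N$ enters), so each $(f^{x_{i,n}})^{-1}(Z\setminus Z_N)$ is of the $\beta$'th functionally additive class in $Y$; since $\mathcal U_n$ is locally finite and the $U_{i,n}$ are co-zero, \cite[Theorem~1.5]{K4} shows this union is of the $\beta$'th functionally additive class in $X\times Y$, hence $\bigcap_{i\in I_n}P_{i,n,N}$ is of the $\beta$'th functionally multiplicative class. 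Finally $B_N$ is a countable intersection of such sets, so it too is of the $\beta$'th functionally multiplicative class, exactly the class claimed. I expect the uniform-convergence step — upgrading the single-sequence convergence guaranteed by~(\ref{eq:2}) to control over \emph{all} admissible index selections simultaneously, so that property~$(*)$ yields one common $N$ — to be the main obstacle; the remainder is class bookkeeping as in Theorem~\ref{th:2}.
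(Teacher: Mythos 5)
Your proof is correct, but it takes a genuinely different route from the paper's. The paper does not form your ``universal'' set: it first transfers the covers $\mathcal U_m$ to an auxiliary pseudo-metric topology $\mathcal T$ on $X$ (via \cite[Proposition~3.2]{M}), shrinks each $\mathcal U_m$ to a locally finite \emph{closed} cover $(F_{s,m}:s\in S_m)$ of $(X,\mathcal T)$ with $F_{s,m}\subseteq U_{i(s),m}$, and sets $B_n=\bigcap_{m}\bigcup_{s\in S_m}F_{s,m}\times(f^{x_{i(s),m}})^{-1}(Z_n)$, an \emph{existential} condition (``for each $m$ there is some $s$ with $x\in F_{s,m}$ and $f(x_{i(s),m},y)\in Z_n$''). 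In that form the covering property follows from one selection $s_m$ and condition~(\ref{eq:2}) with no uniform-convergence step, but the class computation then needs the pseudo-metric detour so that a locally finite union of (closed)$\times$(multiplicative) products is again functionally multiplicative (\cite[Proposition~1.4]{K4}). Your $B_N$ instead imposes the condition for \emph{all} $i$ with $x\in U_{i,n}$, which forces your extra selection/contradiction lemma upgrading~(\ref{eq:2}) to uniform control over all admissible index choices (that argument is valid, since any violating choice is itself a selection to which~(\ref{eq:2}) applies and each $J_n$ is finite and nonempty); in exchange, after complementation the class bookkeeping reduces to exactly the configuration of Theorem~\ref{th:2} --- a locally finite union of (co-zero)$\times$(additive) products handled by \cite[Theorem~1.5]{K4} --- so the pseudo-metric, the refinement and the closed shrinking all disappear. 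Both arguments lean identically on Lemma~\ref{l:45} (whose stated hypotheses include perfect normality of the range, which Proposition~\ref{pr:4} does not literally assume for $\alpha>0$), so you are no worse off than the paper there; on balance your version is more self-contained, while the paper's avoids the uniform-convergence lemma.
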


\begin{proof} Since  $X_{B_\alpha}(f)$ is dense in $X$, there exists a sequence $({\mathcal U}_m=(U_{i,m}:i\in I_m))_{m=1}^\infty$ of locally finite  co-zero covers of $X$ and a sequence  $((x_{i,m}:i\in I_m))_{m=1}^\infty$ of families of points of $X_{B_\alpha}(f)$ such {that condition}~(\ref{eq:2}) holds.

In accordance with \cite[Proposition~3.2]{M} there exists a pseudo-metric on $X$ such that all the set $U_{i,m}$ are co-zero with respect to this pseudo-metric. Denote by $\mathcal T$ the topology on $X$ generated by the pseudo-metric. Obviously, the topology $\mathcal T$ is weaker than the initial one. Using the paracompactness of $(X,{\mathcal T})$, for every $m$ we choose a locally finite open cover ${\mathcal V}_m=(V_{s,m}:s\in
  S_m)$ which refines ${\mathcal  U}_m$. By
\mbox{\cite[Lemma~1.5.6]{Eng}}, for every $m$ there exists a locally finite closed cover  \mbox{$(F_{s,m}:s\in S_m)$} of  $(X,{\mathcal
T})$ such that $F_{s,m}\subseteq V_{s,m}$ for every $s\in S_m$.  Now for every $s\in S_m$ we choose $i(s)\in I_m$ such that $F_{s,m}\subseteq U_{i(s),m}$.

For all $m,n\in{\mathbb N}$ and $s\in S_m$ let $$A_{s,m,n}=(f^{x_{i(s),m}})^{-1}(Z_n),\quad B_{m,n}=\bigcup\limits_{s\in S_m} (F_{s,m}\times
A_{s,m,n}),\quad B_n=\bigcap\limits_{m=1}^\infty B_{m,n}. $$ Since $f$ is of the $\alpha$'th Baire {class} with respect to the second variable, for every $n$ the set $A_{s,m,n}$ belongs to the $\alpha$'th functionally multiplicative {class} /$\alpha+1$/ in $Y$ for all $m\in\mathbb N$ and $s\in S_m$, if $\alpha$
is finite /infinite/ by Lemma~\ref{l:45}. According to~\cite[Proposition~1.4]{K4} the set $B_{m,n}$ is of the $\alpha$'th /$(\alpha+1)$'th/ functionally multiplicative { class}  in $(X,{\mathcal T})\times Y$. Then the set $B_n$ is of the $\alpha$'th /$(\alpha+1)$'th/ functionally multiplicative class in $(X,{\mathcal
T})\times Y$, and, consequently, in $X\times Y$ for every $n$.

We prove that $f(B_n)\subseteq Z_n$ for every $n$. To do this, fix  $n\in\mathbb N$ and $(x,y)\in B_n$. We choose a sequence  $(s_m)_{m=1}^\infty$ such that $x\in
F_{m,s_m}\subseteq U_{m,i(s_m)}$ and \mbox{$f(x_{m,i(s_m)},y)\in Z_n$}. Then $x_{m,i(s_m)}\mathop{\to}\limits_{m\to\infty} x$. Since $f$ is continuous with respect to the first variable, $f(x_{m,i(s_m)},y)\mathop{\to}\limits_{m\to\infty} f(x,y)$. The set $Z_n$ is closed, then $f(x,y)\in Z_n$.

Now we show that $\bigcup\limits_{n=1}^\infty B_n=X\times Y$. Let $(x,y)\in X\times Y$. Then there exists a sequence $(s_m)_{m=1}^\infty$ such that  $x\in
F_{m,s_m}\subseteq U_{m,i(s_m)}$ and $f(x_{m,i(s_m)},y)\mathop{\to}\limits_{m\to\infty} f(x,y)$. Since $(Z,(Z_n)_{n=1}^\infty)$ satisfies~$(*)$,
there is a number $n$ such that $\{f(x_{m,i_m},y):m\in {\mathbb N}\}$ is contained in $Z_n$, i.e. $y\in A_{m,n,i}$ for every $m\in\mathbb N$.
Hence, $(x,y)\in B_n$.\hfill$\Box$ \end{proof}

\begin{theorem}\label{th:3}
     Let $X$ be a strong PP-space, $Y$ a topological space, {$\{Z_n:n\in\mathbb N\}$ a} closed cover of a contractible perfectly normal space $Z$, {let $(Z,(Z_n)_{n=1}^\infty)$ satisfy}~$(*)$ and $Z_n$ {be} weakly $B_1$-favorable for
     $X\times Y$ for every $n\in\mathbb N$. Then
$$C\overline{C}(X\times Y,Z)\subseteq B_1(X\times Y,Z).$$
 \end{theorem}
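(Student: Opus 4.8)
The plan is to deduce the inclusion from the three results already at our disposal: Theorem~\ref{th:2} (to promote separate continuity to membership in the first functional Lebesgue class), Proposition~\ref{pr:4} (to split the domain into pieces each of which is mapped into a single stratum $Z_n$), and Proposition~\ref{pr:2} (to glue the strata back together at the level of Baire-one maps). Throughout I would exploit that $Z$ is perfectly normal, so that in $Z$ every open set is co-zero and every closed set is a zero set; this ``dictionary'' is what lets the separate-continuity hypotheses interact with the functional classes.

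First I would show that $f\in H_1^*(X\times Y,Z)$. Fix $f\in C\overline{C}(X\times Y,Z)$ and a dense set $D\subseteq X$ with $f^x\in C(Y,Z)$ for all $x\in D$. Because $Z$ is perfectly normal, each open $V\subseteq Z$ is co-zero, say $V=h^{-1}((0,1])$ with $h\in C(Z,[0,1])$; hence for continuous $f^x$ the preimage $(f^x)^{-1}(V)=(h\circ f^x)^{-1}((0,1])$ is co-zero in $Y$, that is, $f^x\in H_0^*(Y,Z)$. Thus $D\subseteq X_{H_0^*}(f)$, so the latter set is dense, and together with the continuity of each $f_y$ this gives $f\in C\overline{H_0^*}(X\times Y,Z)$. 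Applying Theorem~\ref{th:2} with $\alpha=0$ then yields $f\in H_1^*(X\times Y,Z)$.

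Next I would produce the decomposition. Since $Z$ is perfectly normal, each closed $Z_n$ is a zero set, so the hypotheses of Proposition~\ref{pr:4} are met with $\alpha=0$ (note $C\overline{C}=C\overline{B}_0$, and that $(Z,(Z_n)_{n=1}^\infty)$ satisfies $(*)$ by assumption). This produces zero sets $B_n\subseteq X\times Y$ with $\bigcup_{n}B_n=X\times Y$ and $f(B_n)\subseteq Z_n$. Every zero set belongs to the first functionally additive class $F_\sigma^*$ (it is trivially a countable union of zero sets), so the $B_n$ are sets of the first functionally additive class in $X\times Y$. Discarding the indices with $Z_n=\emptyset$ (for which necessarily $B_n=\emptyset$), I may assume each $Z_n$ is nonempty.

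Finally I would apply Proposition~\ref{pr:2} with $\alpha=1$ to $f\colon X\times Y\to Z=\bigcup_n Z_n$, taking the sets of the first functionally additive class to be the $B_n$ and the contractible pieces to be the $Z_n$. Condition~(i) is satisfied: each $Z_n$ is nonempty and weakly $B_1$-favorable for $X\times Y$ by hypothesis, and $f\in H_1^*(X\times Y,Z)$ by the first step. The conclusion is exactly $f\in B_1(X\times Y,Z)$. The one real subtlety, and the step I would guard most carefully, is the matching of class indices: Proposition~\ref{pr:4} delivers pieces of a functionally \emph{multiplicative} class, whereas Proposition~\ref{pr:2} demands pieces of a functionally \emph{additive} class, and it is precisely running Proposition~\ref{pr:4} at the level $\alpha=0$ (where the multiplicative class consists of zero sets, which are simultaneously of the first additive class) that reconciles the two. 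Everything else is the perfect-normality translation of ``closed/open'' into ``zero/co-zero''.
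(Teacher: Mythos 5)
Your proposal is correct and follows the paper's own proof essentially verbatim: Theorem~\ref{th:2} gives $f\in H_1^*(X\times Y,Z)$, Proposition~\ref{pr:4} at level $\alpha=0$ yields the zero sets $B_n$ (which are $F_\sigma^*$), and Proposition~\ref{pr:2}(i) with $\alpha=1$ concludes. The extra details you supply (the $C\overline{C}\subseteq C\overline{H_0^*}$ translation, discarding empty strata, and the multiplicative-to-additive reconciliation at $\alpha=0$) are exactly the points the paper leaves implicit, and they are handled correctly.
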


\begin{proof}
  Let $f\in C\overline{C}(X\times Y,Z)$. In accordance with Theorem~\ref{th:2},  $f\in H_1^*(X\times Y,Z)$. Moreover, Proposition~\ref{pr:4} implies that there exists a sequence of zero-sets  $B_n\subseteq X\times Y$ such that
$\bigcup\limits_{n=1}^\infty B_n=X\times Y$ and $f(B_n)\subseteq Z_n$ for every $n\in{\mathbb N}$. Since for every $n$ the set $B_n$ is an $F_\sigma^*$-set and $H_1^*(X\times Y,Z_n)\subseteq B_1(X\times Y,Z_n)$, $f\in B_1(X\times Y,Z)$ by Proposition~\ref{pr:2}.\hfill$\Box$ \end{proof}

 \begin{definition}{\rm
   A topological space $X$ is called {\it strongly $\sigma$-metrizable}, if it is $\sigma$-metrizable with a stratification $(X_n)_{n=1}^\infty$ and $(X,(X_n)_{n=1}^\infty)$ has the property~$(*)$.}
 \end{definition}

Taking into account that every regular strongly $\sigma$-metrizable space with metrizable separable stratification is perfectly normal (see \cite[Corollary~4.1.6]{Masl_dis}) and every metrizable separable arcwise connected and  locally arcwise connected space is weakly $B_\alpha$-favorable for any topological space $X$ for all
$0\le\alpha<\omega_1$ \cite[Theorem~3.3.5]{Dis}, we immediately obtain the following corollary of Theorem~\ref{th:3}.

 \begin{corollary}
   Let $X$ be a strong PP-space, $Y$ a topological space and $Z$ a contractible regular strongly $\sigma$-metrizable space with a stratification $(Z_n)_{n=1}^\infty$, where $Z_n$ is a metrizable separable arcwise connected and  locally arcwise connected space for every $n\in\mathbb N$. Then $$C\overline{C}(X\times Y,Z)\subseteq B_1(X\times Y,Z).$$
 \end{corollary}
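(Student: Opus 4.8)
The plan is to derive this corollary directly from Theorem~\ref{th:3} by verifying, one hypothesis at a time, that the assumptions of that theorem are satisfied under the conditions stated here. The domain spaces require no argument: $X$ is a strong PP-space and $Y$ is an arbitrary topological space, exactly as in Theorem~\ref{th:3}. All the remaining work concerns the target space $Z$ and its stratification $(Z_n)_{n=1}^\infty$.

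First I would record that $\{Z_n:n\in\mathbb N\}$ is a closed cover of $Z$. This is immediate from the definition of a $\sigma$-metrizable space, whose stratification is by construction an increasing sequence of closed metrizable subspaces with union $Z$; in particular the $Z_n$ form a closed cover. Next I would observe that the pair $(Z,(Z_n)_{n=1}^\infty)$ satisfies property $(*)$, which is built into the definition of a \emph{strongly} $\sigma$-metrizable space, so no argument beyond quoting that definition is needed. Contractibility of $Z$ is assumed outright.

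I would then verify that $Z$ is perfectly normal by invoking \cite[Corollary~4.1.6]{Masl_dis}, which asserts that every regular strongly $\sigma$-metrizable space whose stratification consists of metrizable separable subspaces is perfectly normal; since $Z$ is regular and each $Z_n$ is metrizable separable by hypothesis, this applies. Finally I would check the weak $B_1$-favorability of each $Z_n$ for $X\times Y$: since each $Z_n$ is metrizable separable, arcwise connected and locally arcwise connected, \cite[Theorem~3.3.5]{Dis} shows that $Z_n$ is weakly $B_\alpha$-favorable for any topological space, in particular for the domain $X\times Y$, for all $0\le\alpha<\omega_1$, and specializing to $\alpha=1$ gives precisely what Theorem~\ref{th:3} demands.

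With all hypotheses of Theorem~\ref{th:3} confirmed — $X$ a strong PP-space, $Y$ a topological space, $\{Z_n\}$ a closed cover of the contractible perfectly normal space $Z$, the pair $(Z,(Z_n))$ satisfying $(*)$, and each $Z_n$ weakly $B_1$-favorable for $X\times Y$ — the inclusion $C\overline{C}(X\times Y,Z)\subseteq B_1(X\times Y,Z)$ follows at once. Since every step is a direct appeal either to a definition or to an already-established result, there is no genuine obstacle here; the only point demanding care is matching the precise hypotheses of the two external citations, \cite[Corollary~4.1.6]{Masl_dis} and \cite[Theorem~3.3.5]{Dis}, to the separability and connectivity assumptions placed on the stratification $(Z_n)$, so that both may legitimately be applied.
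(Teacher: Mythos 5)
Your proposal is correct and follows exactly the paper's own route: the author likewise derives the corollary by checking the hypotheses of Theorem~\ref{th:3}, quoting \cite[Corollary~4.1.6]{Masl_dis} for perfect normality of $Z$ and \cite[Theorem~3.3.5]{Dis} for weak $B_1$-favorability of each $Z_n$, with the closed-cover and property~$(*)$ conditions coming from the definition of a strongly $\sigma$-metrizable space. No discrepancies to report.
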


{\small

}

\end{document}